
\documentclass[11pt]{amsart}
\usepackage{graphics}

\usepackage{dsfont}
\makeatletter
\newcommand*\bigcdot{\mathpalette\bigcdot@{.5}}
\newcommand*\bigcdot@[2]{\mathbin{\vcenter{\hbox{\scalebox{#2}{$\m@th#1\bullet$}}}}}
\makeatother

\usepackage{amsmath,amssymb}
\usepackage{amsmath}
\usepackage{amsthm}
\usepackage{amsfonts}
\usepackage{xcolor}
\usepackage[english]{babel}
\usepackage[margin=1.09in]{geometry}
\usepackage[colorlinks=true,
linkcolor=blue]{hyperref}

\definecolor{pink}{rgb}{1,0,1}

\usepackage[latin9]{inputenc}
\usepackage{amsmath}
\usepackage{amsfonts}
\usepackage{amssymb}
\usepackage{amsthm}

\parindent 0 mm
\parskip 4 mm

\newtheorem{theo}{Theorem}[section]
\newtheorem{prop}[theo]{Proposition}

\newtheorem{lemm}[theo]{Lemma}

\theoremstyle{definition}

\newtheorem{rema}[theo]{Remark}

\newcommand{\nwc}{\newcommand}
\nwc{\eps}{\epsilon}
\nwc{\vareps}{\varepsilon}
\nwc{\Oph}{\operatorname{Op}_\hbar}
\nwc{\la}{\langle}
\nwc{\ra}{\rangle}

\nwc{\mf}{\mathbf} 
\nwc{\blds}{\boldsymbol} 
\nwc{\ml}{\mathcal} 

\nwc{\defeq}{\stackrel{\rm{def}}{=}}

\nwc{\cE}{\ml{E}}
\nwc{\cN}{\ml{N}}
\nwc{\cO}{\ml{O}}
\nwc{\cP}{\ml{P}}
\nwc{\cU}{\ml{U}}
\nwc{\cV}{\ml{V}}
\nwc{\cW}{\ml{W}}
\nwc{\tU}{\widetilde{U}}
\nwc{\IN}{\mathbb{N}}
\nwc{\IR}{\mathbb{R}}
\nwc{\IZ}{\mathbb{Z}}
\nwc{\IC}{\mathbb{C}}
\nwc{\IT}{\mathbb{T}}
\nwc{\tP}{\widetilde{P}}
\nwc{\tPi}{\widetilde{\Pi}}
\nwc{\tV}{\widetilde{V}}
\nwc{\supp}{\operatorname{supp}}
\nwc{\rest}{\restriction}

\newcommand{\N}{{\mathbb N}}
\newcommand{\R}{{\mathbb R}}

\renewcommand{\d}{\partial}

\renewcommand{\phi}{\varphi}

\newcommand{\M}{\mathcal{M}}

\newcommand{\ep}{\varepsilon}

\addtolength{\baselineskip}{1pt}

\title [Upper bounds on the size of nodal sets] {Upper bounds on the size of nodal sets for Gevrey and quasianalytic Riemannian manifolds}

\author[Hezari]{Hamid Hezari }
\address{Department of Mathematics, UC Irvine, Irvine, CA 92617, USA} \email{hezari@math.uci.edu}


\date{\today}


\begin{document}


\maketitle

\begin{abstract}
We find new polynomial upper bounds for the size of nodal sets of eigenfunctions when the Riemannian manifold has a Gevrey or quasianalytic regularity. 
 
\end{abstract}

\section{Introduction} Let $(M,g)$ be a smooth compact connected boundaryless Riemannian manifold of dimension $n$.  Suppose $-\Delta_g$ is the positive Laplace-Beltrami operator on $(M, g)$ and $\psi_\lambda$ is an eigenfunction of $-\Delta_g$ with eigenvalue $\lambda$. We are interested in upper bounds on the $(n-1)$-dimensional Hausdorff measure  $\mathcal{H}^{n-1} (Z_{\psi_\lambda})$ of the nodal set $Z_{\psi_\lambda}=\{ x \in M; \; \psi_\lambda(x)=0\}$ of $\psi_\lambda$. When $(M, g)$ belongs to the analytic class $C^\omega$, Donnelly and Fefferman \cite{DF1} proved the upper bound $C \sqrt{\lambda}$. In the $C^\infty$ case and for $n \geq 3$, a  result of Logunov \cite{La} gives a polynomial upper bound of the form $C\lambda^ \alpha$ for some non-explicit but apparently large $\alpha > \frac{1}{2}$ depending only on $n$. For $n=2$, Logunov and Malinnikova \cite{LMb} find upper bounds of the form $C\lambda^{\frac{3}{4}- \beta}$ for some small universal $\beta \in (0, \frac{1}{4})$. In this paper, we are interested in the intermediate cases of regularity, namely ultradifferentiable Riemannian manifolds, also called Denjoy-Carleman  (or Roumieu) classes $C^{\mathcal M}$ that lie between the analytic and smooth classes, i.e. $C^\omega \subset C^{\mathcal M} \subset C^\infty$. We are mainly interested in Gevrey classes and quasianalytic classes of Riemannian manifolds and obtain new upper bounds for the size of nodal sets for these cases. 

Our first result in this direction is:

\begin{theo} \label{main} Let $(M, g)$ be a compact connected boundaryless Riemannian manifold that belongs to the Gevrey class $G^s$, $s \geq 1$. Let $\psi_\lambda$ be an eigenfunction of $-\Delta_g$ with eigenvalue $\lambda \geq 0$.  
	Then 
	$$\mathcal{H}^{n-1} (Z_{\psi_\lambda}) \leq C {\lambda}^{\frac{s}{2}} ,$$
where the constant $C= C(M, g, s)$ depends only on $(M, g)$ and $s$. 
\end{theo}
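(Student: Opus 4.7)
The plan is to extend the argument of Donnelly and Fefferman \cite{DF1} from the real-analytic to the Gevrey setting by substituting, for their holomorphic extension of $\psi_\lambda$ to the Grauert tube, an optimally-truncated almost-holomorphic extension. The $s=1$ case of the theorem then recovers \cite{DF1}, and the exponent $s/2$ reflects the fact that for $s>1$ the natural complex neighborhood of $M$ on which $\psi_\lambda$ admits a useful extension shrinks as $\lambda\to\infty$.

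The first ingredient is a Gevrey-type Cauchy bound for the eigenfunction. Because $\Delta_g$ is a second-order elliptic operator with coefficients in $G^s$, an iterated bootstrap applied to $(-\Delta_g-\lambda)\psi_\lambda=0$ (or, equivalently, standard Gevrey elliptic regularity) yields in every local chart
\[
|\partial^\alpha \psi_\lambda(x)| \leq C^{|\alpha|+1}(\alpha!)^s \lambda^{|\alpha|/2}\|\psi_\lambda\|_{L^2(M)},
\]
uniformly on compact subsets, with $C=C(M,g,s)$. When $s=1$ this is the classical Cauchy estimate which makes $\psi_\lambda$ real-analytic on the natural wavelength scale $\sim \lambda^{-1/2}$.

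Using this bound I would construct the almost-holomorphic extension
\[
\widetilde\psi_\lambda(x+iy)=\sum_{|\alpha|\le N(y,\lambda)}\frac{(iy)^\alpha}{\alpha!}\,\partial^\alpha\psi_\lambda(x),
\]
with the truncation $N(y,\lambda)$ chosen (via Stirling) to balance the term sizes and minimize the $\overline\partial$ error. On a complex tube of width $\rho\sim\lambda^{-s/2}$ centered on $M$, one obtains an extension $\widetilde\psi_\lambda$ that is polynomially controlled in $\lambda$ and whose $\overline\partial$ decays exponentially in a positive power of $\lambda$. After rescaling so that the tube has unit width in the imaginary direction, $\widetilde\psi_\lambda$ becomes an almost-holomorphic function on a polydisk of unit size with doubling exponent at most $C\lambda^{s/2}$. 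From this point the Donnelly-Fefferman machinery goes through: one slices $\widetilde\psi_\lambda$ by complex lines, applies a Jensen-type inequality for almost-holomorphic functions (with the $\overline\partial$ correction absorbed as a negligible error) to bound the number of complex zeros on each line by $C\lambda^{s/2}$, and uses a Crofton-type integral-geometric formula over the Grassmannian of complex lines to obtain the estimate $\mathcal{H}^{n-1}(Z_{\psi_\lambda}\cap B)\le C\lambda^{s/2}\rho^{n-1}$ on each coordinate ball $B$ of radius $\rho$. Summing over a cover of $M$ of cardinality $\sim\rho^{-n}$ yields the desired bound.

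The main obstacle lies in the construction and quantitative analysis of the almost-holomorphic extension: because for $s>1$ the Taylor series of $\psi_\lambda$ diverges, the three parameters---truncation order $N$, tube width $\rho$, and growth rate of $\widetilde\psi_\lambda$---must be balanced so that the final nodal exponent works out to precisely $s/2$ and not a worse power. A related technical point is that the Jensen-Crofton step must be set up to accommodate a non-zero, though exponentially small, $\overline\partial$ error without incurring polynomial corrections that would spoil the main bound.
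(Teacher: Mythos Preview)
Your proposal has a concrete gap at the covering step. Granting your local bound $\mathcal{H}^{n-1}(Z_{\psi_\lambda}\cap B)\le C\lambda^{s/2}\rho^{n-1}$ on each ball $B$ of radius $\rho\sim\lambda^{-s/2}$, summing over a cover of cardinality $\sim\rho^{-n}$ gives
\[
\rho^{-n}\cdot C\lambda^{s/2}\rho^{n-1}=C\lambda^{s/2}\rho^{-1}=C\lambda^{s},
\]
not $C\lambda^{s/2}$. The extra factor $\rho^{-1}$ is not a slip: your almost-holomorphic extension lives only on a tube of width $\rho$, so Jensen applies only on disks of that radius, and a unit-length line requires $\sim\rho^{-1}$ of them. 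Separately, the doubling exponent of an eigenfunction is always $\le C\sqrt\lambda$ (a $C^\infty$ Carleman fact), not $C\lambda^{s/2}$ as you assert after rescaling; using that correct per-disk count still yields only $\lambda^{(s+1)/2}$ globally.

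There is also a problem with your input estimate $|\partial^\alpha\psi_\lambda|\le C^{|\alpha|+1}(\alpha!)^s\lambda^{|\alpha|/2}$. This is precisely the bound \eqref{Hypo-bad} that the paper singles out as inadequate for $|\alpha|\gg\sqrt\lambda$, which is the regime that drives the argument. What is actually needed is \eqref{Hypo-good}, namely $|\partial^\alpha\psi_\lambda|\le Ce^{\sqrt\lambda}A^{|\alpha|}(\alpha!)^s$ with a $\lambda$-\emph{independent} Gevrey constant $A$; this is obtained not by rescaling but by applying Gevrey hypoellipticity to the harmonic lift $h(x,t)=e^{\sqrt\lambda\,t}\psi_\lambda(x)$ on $M\times\mathbb{R}$.

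The paper's route is in fact not complex-analytic at all. Crofton reduces the problem to counting zeros on real line segments; Rolle's theorem combined with \eqref{Hypo-good} shows that a subinterval $I$ carrying $p$ zeros satisfies $\sup_I|\psi_\lambda|\le C(A|I|)^p p!^{s-1}e^{\sqrt\lambda}$. The decisive new ingredient, absent from your outline, is the Logunov--Malinnikova sublevel-set estimate \eqref{sublevel}, which bounds $|\{|\psi_\lambda|\le t\}|\le K_1 t^{1/(K\sqrt\lambda)}$ and plays the role that your Jensen lower bound would have played. An iterated pigeonhole on subintervals then forces the set $A_m$ of lines with at least $(m\sqrt\lambda)^s$ zeros to have measure $\le Ce^{-cm}$, and summing $\sum_m ((m+1)\sqrt\lambda)^s e^{-cm}$ gives the bound $C\lambda^{s/2}$.
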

 For $s>1$ sufficiently small, Theorem \ref{main} improves the polynomial upper bounds of \cite{La} and \cite{LMb}, of course only in the Gevrey case.  The proof uses Crofton's formula, hypoelliptic estimates in the Gevrey setting, and also a fascinating and powerful result of \cite{LMa} on the measure of sublevel sets of eigenfunctions. Our method also applies to the more general (Denjoy-Carleman) classes of Riemannian manifolds. In particular, when $(M, g)$ belongs to the quasianalytic class $C^{\mathcal M}$, $\mathcal M_k= \left ( \log (k +e) \right)^{k}$, we prove: 
 \begin{theo}\label{quasi}
$\mathcal{H}^{n-1} (Z_{\psi_\lambda}) \leq C \sqrt{\lambda} \log (\lambda +e)$.  
   \end{theo}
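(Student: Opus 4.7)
The proof of Theorem \ref{quasi} follows the same three-step strategy as Theorem \ref{main}, with the Gevrey class $G^s$ replaced throughout by the Denjoy--Carleman class $C^{\mathcal M}$ with $\mathcal M_k=(\log(k+e))^k$. As before, Crofton's formula expresses $\mathcal{H}^{n-1}(Z_{\psi_\lambda})$, up to a dimensional constant, as the average over short geodesic arcs $\gamma$ of the number of zeros of $\psi_\lambda$ along $\gamma$; it therefore suffices to bound $\#\{t\in[0,1] : \psi_\lambda(\gamma(t))=0\}$ uniformly in $\gamma$.

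The elliptic regularity of $-\Delta_g$, adapted to $C^{\mathcal M}$ regularity of the metric, yields the derivative bound
$$\|\partial^\alpha \psi_\lambda\|_{L^\infty(M)}\leq C^{|\alpha|+1}\,\mathcal M_{|\alpha|}\,\lambda^{|\alpha|/2}\,\|\psi_\lambda\|_{L^2(M)}$$
for every multi-index $\alpha$, which is the direct quasianalytic analogue of the Gevrey hypoelliptic estimate used in Theorem \ref{main}. Restricting to a geodesic, the derivatives of $f_\gamma(t)=\psi_\lambda(\gamma(t))$ satisfy $|f_\gamma^{(k)}(t)|\leq C^{k+1}(\log(k+e))^k\lambda^{k/2}$. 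Combined with the Logunov--Malinnikova sublevel-set theorem \cite{LMa}---which prevents $|\psi_\lambda|$ from being small on a set of large measure---a Remez/Tur\'an-type argument with a free parameter $k$ bounds the number of zeros of $f_\gamma$ on a unit arc by roughly $C\sqrt\lambda\,\mathcal M_k^{1/k}=C\sqrt\lambda\log(k+e)$, up to lower-order factors. Optimizing in $k$ produces the per-arc bound of order $\sqrt\lambda\log(\lambda+e)$, and integrating against the Liouville measure on $SM$ yields Theorem \ref{quasi}.

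The key quantitative point that distinguishes this theorem from Theorem \ref{main} is the size of $\mathcal M_k^{1/k}$, which drives the optimization: in the Gevrey case this quantity grows polynomially in $k$ and the optimal $k$ produces the power $\lambda^{s/2}$; here it grows only logarithmically in $k$, which is exactly what yields the mild logarithmic correction to the Donnelly--Fefferman bound $\sqrt\lambda$. The main technical hurdle is twofold: first, to secure the hypoelliptic estimate of the preceding display in the Denjoy--Carleman class with explicit $\lambda$-dependence matching the Gevrey form; and second, to push the sublevel-set/Remez-type zero-counting step of Theorem \ref{main} so that the order $k$ of derivatives used is allowed to depend on $\lambda$, which is necessary in order to harvest the full strength of the slow growth of $\mathcal M_k$.
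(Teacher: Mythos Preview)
Your high-level outline (Crofton $+$ Denjoy--Carleman hypoellipticity $+$ the Logunov--Malinnikova sublevel estimate) matches the paper, and Theorem~\ref{quasi} is indeed the special case $\mathcal M_k=(\log(k+e))^k$ of Theorem~\ref{general}. But two of the steps you sketch do not work as written.

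First, the hypoelliptic estimate you quote is off in two ways that both matter. The class $C^{\mathcal M}$ carries a factor $|\alpha|!\,\mathcal M_{|\alpha|}$, not $\mathcal M_{|\alpha|}$ alone; and the $\lambda$-dependence the paper actually uses (Proposition~\ref{HypoM}) is $e^{\sqrt\lambda}$, obtained from the harmonic extension $h(x,t)=e^{\sqrt\lambda\,t}\psi_\lambda(x)$, not $\lambda^{|\alpha|/2}$. The paper explicitly warns (Remark after Proposition~\ref{Hypo}) that the rescaled estimate \eqref{Hypo-bad} with $\lambda^{|\alpha|/2}$ is too weak for $|\alpha|\gtrsim\sqrt\lambda$, which is exactly the regime one needs. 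With the correct bound and Lemma~\ref{Rolle's}, on a \emph{unit} arc with $N$ zeros one gets
\[
\sup_I|\psi_\lambda|\le \frac{|I|^N}{N!}\cdot C e^{\sqrt\lambda}A^N N!\,\mathcal M_N
= C e^{\sqrt\lambda}(A\log(N+e))^N,
\]
which is never small; your display only looks favorable because the missing $N!$ cancelled the $1/N!$ from Rolle. There is no free parameter $k$ to optimize here: in the Rolle step the order of the derivative is forced to equal the number of zeros.

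Second, and relatedly, you cannot obtain a uniform per-arc zero bound outside the analytic case; the Logunov--Malinnikova inequality controls the \emph{measure} of sublevel sets, not the size of $\psi_\lambda$ on a prescribed line. The paper never bounds $n(x,\xi)$ pointwise. Instead it stratifies $S^*B_1^n$ into $A_m=\{J_{m\lfloor\sqrt\lambda\rfloor}\le n(x,\xi)<J_{(m+1)\lfloor\sqrt\lambda\rfloor}\}$, and the heart of the argument is Lemma~\ref{I}: on any line in $A_m$ one subdivides into $N_0\approx n_0/(m\sqrt\lambda)$ short subintervals, runs a multi-scale pigeonhole, and finds a subset of length $\ge|I_{x,\xi}|/4^\ell$ on which $|\psi_\lambda|\le C e^{(1-c_0m\ell)\sqrt\lambda}$. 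Only then does the sublevel bound \eqref{sublevel} enter, to give $\mu(A_m)\le Ce^{-cm}$; summing $\sum_m e^{-cm}J_{(m+1)\lfloor\sqrt\lambda\rfloor}$ with the moderate-growth lemma yields $\sqrt\lambda\,\mathcal M_{\lfloor\sqrt\lambda\rfloor}^{1/\lfloor\sqrt\lambda\rfloor}=\sqrt\lambda\log(\lfloor\sqrt\lambda\rfloor+e)$. The subdivision into short intervals is what makes the $(A|I|)^p$ factor in Proposition~\ref{smallness} beat the $\mathcal M_p$ growth, and it is the step your sketch omits.
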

This quasianalytic case is interesting because up to a factor $\log (\lambda +e)$, the upper bound agrees with Yau's upper bound conjecture \cite{Y82}.  

More generally, if $(M, g) \in C^{\mathcal M}$, where $\mathcal M$ is a \textit{regular sequence} with \emph {moderate growth} (see the next section for definitions), then we have:

\begin{theo}\label{general}
	$\mathcal{H}^{n-1} (Z_{\psi_\lambda}) \leq C \sqrt{\lambda }\; \left ( \mathcal M _{ \lfloor \sqrt{\lambda} \rfloor } \right ) ^{\frac{1}{\lfloor \sqrt{\lambda} \rfloor }}$.  
\end{theo}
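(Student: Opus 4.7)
The plan is to run the three-step strategy behind Theorem \ref{main} in the general Denjoy--Carleman setting, replacing the Gevrey weight $(k!)^s$ by $k!\,\mathcal M_k$ throughout. The three ingredients are: (i) a quantitative ultradifferentiable regularity estimate
$$\|\partial^\alpha \psi_\lambda\|_{L^\infty} \leq A^{|\alpha|+1}\,|\alpha|!\,\mathcal M_{|\alpha|}\,\lambda^{|\alpha|/2}\,\|\psi_\lambda\|_{L^\infty}$$
valid in any coordinate chart; (ii) Crofton's formula; and (iii) the Logunov--Malinnikova sublevel-set inequality from \cite{LMa}. Step (i) is the analogue of the hypoelliptic estimate used in the proof of Theorem \ref{main} and would be obtained by iterating the interior elliptic estimate for $(-\Delta_g - \lambda)\psi_\lambda = 0$; the regularity and moderate-growth hypotheses on $\mathcal M$ are precisely what guarantee that $C^{\mathcal M}$ is closed under products, compositions, and the algebraic operations needed for the iteration to close without the constants degenerating.

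By Crofton's formula one reduces to bounding the average number of zeros of $\psi_\lambda$ along short geodesic segments in a chart. On a segment $\gamma$ of length $L=c/\sqrt\lambda$, the restricted function $f(t)=\psi_\lambda(\gamma(t))$ inherits the bound $|f^{(k)}(t)|\le B^{k+1}k!\,\mathcal M_k\,\lambda^{k/2}\,\|\psi_\lambda\|_{L^\infty}$. I would then compare $f$ to its Taylor polynomial $P_N$ of degree $N=\lfloor\sqrt\lambda\rfloor$: the remainder satisfies $\|f-P_N\|_\infty \le (Bc)^{N+1}\mathcal M_{N+1}\,\|\psi_\lambda\|_{L^\infty}$, while $P_N$ is a polynomial of degree $N$ with at most $N$ real zeros. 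Invoking the sublevel-set estimate of \cite{LMa} on a positive-density subfamily of segments produces a base point at which $|\psi_\lambda|$ is quantitatively bounded below by a multiple of $(\mathcal M_N)^{1/N}\,\|\psi_\lambda\|_{L^\infty}$. This lower bound, combined with a Remez-type inequality for polynomials applied to $P_N$, controls how many zeros of $f$ can fit in the segment and yields the count $\sqrt\lambda\,(\mathcal M_k)^{1/k}$ per unit length after summing over the $O(\sqrt\lambda)$ refinement that covers the chart.

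I expect the most delicate point to be this zero-counting step, namely the precise balancing between the Taylor-remainder upper bound and the sublevel-set lower bound that extracts exactly the factor $(\mathcal M_{\lfloor\sqrt\lambda\rfloor})^{1/\lfloor\sqrt\lambda\rfloor}$. The choice $k=\lfloor\sqrt\lambda\rfloor$ is the optimum at which the polynomial-degree cost is balanced against the gain in the Taylor remainder. Both of the standing assumptions on $\mathcal M$ are used in an essential way: regularity ensures that $(\mathcal M_k)^{1/k}$ is monotone in $k$, so that the choice of $k$ is meaningful and insensitive to minor rescalings, while moderate growth $\mathcal M_{j+k} \le H^{j+k}\mathcal M_j\mathcal M_k$ keeps the constants appearing in the elliptic iteration of step (i) and in the chain-rule estimates for the restriction of $\psi_\lambda$ to $\gamma$ uniformly controlled. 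Without either hypothesis, the constants would accumulate at each iteration and the final sum over segments would fail to close.
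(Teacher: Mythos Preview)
Your sketch has two concrete gaps, and the overall mechanism is different from what the paper actually does.

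First, the regularity estimate you invoke, $|\partial^\alpha\psi_\lambda|\le A^{|\alpha|+1}|\alpha|!\,\mathcal M_{|\alpha|}\,\lambda^{|\alpha|/2}\|\psi_\lambda\|_{L^\infty}$, is precisely the ``rescaled'' form that the paper isolates as inadequate (see the remark following Proposition~\ref{Hypo}): it deteriorates badly once $|\alpha|\ge\sqrt\lambda$. The argument requires instead the estimate \eqref{HypoM-good} with the $\lambda$-dependence collected into a single prefactor $e^{\sqrt\lambda}$, obtained by passing to the harmonic extension $e^{t\sqrt\lambda}\psi_\lambda(x)$ on $M\times\mathbb R$ and using a $C^{\mathcal M}$ parametrix there.

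Second, the Taylor--Remez mechanism does not close. With $N=\lfloor\sqrt\lambda\rfloor$ and $L=c/\sqrt\lambda$, the remainder you compute is $(Bc)^{N+1}\mathcal M_{N+1}\|\psi_\lambda\|_{L^\infty}$; for any genuinely non-analytic weight (so $\mathcal M_N^{1/N}\to\infty$) this is enormous, not small --- in the Gevrey case it is of order $\lambda^{(s-1)\sqrt\lambda/2}\|\psi_\lambda\|_{L^\infty}$ --- and hence $P_N$ carries no information about the zeros of $f$. Relatedly, the lower bound you quote, ``$|\psi_\lambda|\gtrsim(\mathcal M_N)^{1/N}\|\psi_\lambda\|_{L^\infty}$ at a base point,'' cannot be correct as written since $(\mathcal M_N)^{1/N}\ge 1$.

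The paper runs the logic in the opposite direction and uses no polynomial approximation or Remez inequality. The zero-counting input is Rolle's theorem (Lemma~\ref{Rolle's}): if $u$ has $p$ zeros on $I$ then $\sup_I|u|\le |I|^p\sup_I|u^{(p)}|/p!$, which combined with \eqref{HypoM-good} gives $\sup_I|\psi_\lambda|\le C(A|I|)^p\mathcal M_p\,e^{\sqrt\lambda}$. An iterated subdivision argument (Lemma~\ref{I}, unchanged in statement for general $\mathcal M$) then shows that any line carrying at least $J_{m\lfloor\sqrt\lambda\rfloor}$ zeros, where $J_k=(k!\,\mathcal M_k)^{1/k}$, contains a subinterval of relative length $\ge 4^{-\ell}$ on which $|\psi_\lambda|\le C e^{(1-c_0 m\ell)\sqrt\lambda}$. \emph{Only now} is the Logunov--Malinnikova sublevel estimate \eqref{sublevel} invoked, and in the opposite role from your sketch: it bounds the $n$-dimensional volume of $\{|\psi_\lambda|\le e^{-c m\ell\sqrt\lambda}\}$ from above, and Fubini turns this into $\mu(A_m)\le Ce^{-cm}$. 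Summing $\sum_m e^{-cm}J_{(m+1)\lfloor\sqrt\lambda\rfloor}$ via the moderate-growth inequality $(\mathcal M_{mj})^{1/mj}\le e^{2\gamma}m^{\gamma}(\mathcal M_j)^{1/j}$ finishes the proof.
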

When $\mathcal M_k = k!^{s-1}$, $s \geq 1$, we have $C^{\mathcal M} = G^s$ and thus Theorem \ref{main} is in fact just a particular case of Theorem \ref{general}. We can generate more results by considering more examples of weight sequences such as $M_k = \log^{sk}(k+e)$, $s \geq 0$, which would provide the upper bounds $$\sqrt{\lambda} \log ^{s} (\lambda + e).$$
Although Theorem \ref{general} is more general, but to keep the presentation smooth, we focus mainly on the Gevrey case and only highlight the changes needed in the notationally bothersome and less known case of Denjoy-Carleman classes. 

It is worth mentioning that one can probably use the \textit{Bang degree} (see \cite{NSV}), combined with the method of this paper, to obtain the above estimates. It would also be interesting to study the case of manifolds with boundary (see \cite{LMNN}) and  Steklov eigenfunctions (see \cite{Ze2}) in the Gevrey setting. 

\section{Hypoellipticity of $\Delta_g$ in the ultradifferentiable setting}
We start this section by reviewing the ultradifferentiable classes that we are interested in.
\subsection{The Gevrey class and Denjoy-Carleman classes}
Let $U \subset \R^n$ be an open set and let $s \geq 1$. The Gevrey class $G^s(U)$ is defined as the space of smooth functions $f$ on $U$ such that for each compact $K \subset U$ there exist $C$ and $A$ positive such that for all multiindices $\alpha \in \mathbb N^n$, one has
$$ \sup_K |\d^\alpha f(x)| \leq C A^{|\alpha|} |\alpha|!^s,$$
where $|\alpha| = \alpha_1 + \dots + \alpha_n$.
We refer to such functions as $s$-Gevrey functions. 
When $s=1$ we have $G^1(U) = C^\omega(U)$, i.e. the class of analytic functions on $U$. For $s \leq s'$ we have $G^{s} \subset G^{s'}$. The space $G^s(U)$  is closed under addition and multiplication. 

A map $f=(f_1, \dots, f_n): U\subset \R^n \to \R^n $ is called $s$-Gevrey if each $f_j$ is $s$-Gevrey. The composition of two $s$-Gevrey maps is $s$-Gevrey if the composition is well-defined.   

A smooth manifold $M$ belongs to the Gevrey class $G^s$ if it has an atlas $\{(U_\beta, \phi_\beta); \beta \in \mathcal A\}$ whose transition maps $\phi_{\beta_1} \circ \phi_{\beta_2}^{-1}$  are in $G^s$. An $s$-Gevrey structure on $M$ is a maximal atlas of $s$-Gevrey charts. We say a Riemannian manifold $(M, g)$ belongs to $G^s$ if in addition to $M$, the metric $g$ also belongs to the Gevrey class of degree $s$, which is equivalent to $g_{ij} \in G^s$ on every Gevrey chart $(U, \phi)$ belonging to the Gevrey structure.  As examples of $G^s$ Riemannian manifolds, consider any real analytic boundaryless manifold $M$ (such as a sphere or a torus), then there is an infinite dimensional space of $s$-Gevrey metrics $g$ that $M$ can be equipped with. This can be done first by defining Gevrey metrics locally on Gevrey charts, then use an appropriate partition of unity to obtain a global Gevrey metric. In the construction of the partition of unity, one needs to use the bump functions,
\begin{equation} 
g_\gamma(x)= \begin{cases}  e^{- \frac{1}{(x(1-x))^\gamma}}  & x \in (0,1 ), \\ 0  &  x \notin (0, 1). \end{cases}
\end{equation}
For each $\gamma >0$, the function $g_\gamma$ belongs to $G^{1+\frac{1}{\gamma}}$. The standard partition of unity theorem uses $g_1$ which is $2$-Gevrey and is not suitable for localizing functions within the class $G^s$ if $s<2$.

In fact there are more general classes of smooth functions associated to increasing sequences $\M=\{ \M_k \}_{k \in \N}$ of positive real numbers, called Denjoy-Carleman or Roumieu functions, and denoted by $C^\M$. Given $U \subset \R^n$ open, we define 
$C^\M(U)$ to be the class of functions $f$ on $U$ such that for all $K \subset U$ compact, there exist $C$ and $A$ such that for all $\alpha \in \N^n$,
$$ \sup_K |\d^\alpha f(x)| \leq C A^{|\alpha|} |\alpha|! \; \M_{|\alpha|}.$$
The sequence $\M=\{ \M_k \}_{k \in \N}$  is called the weight sequence.  By this definition, the $s$-Gevrey class corresponds to the weight sequence $\M_k = k!^{s-1}$. Following \cite{F}, we say $\M$ is a \emph{regular wight sequence} if 
\begin{align*} (i) \; \; & \M_0 =1,\\
                            (ii) \;  \; &\M_k \; \text{is increasing},\\
                            (iii)\; \; & \M_k^2 \leq \M_{k-1}\M_{k+1}, \quad \forall k \in \N \quad \text{(logarithmic convexity)}\\
                            (iv)\; \;  & \sup_{k \in \N^{>0}} \left( \frac{\M_{k+1}}{\M_k} \right )^{1/k} < \infty.
\end{align*}
In addition, a sequence $\M$ has moderate growth if 
\begin{align}\label{moderate}  (v)\; \; &  \sup_{k, m \in \N^{>0}} \left( \frac{\M_{k+m}}{\M_k \M_m} \right )^{1/(k+m)} < \infty. \qquad \qquad \qquad \qquad \qquad\end{align}
Conditions $(i)-(iii)$  imply that the class $C^\M$ is closed under addition, multiplication, and composition. Condition (iv) guarantees that the class is closed under derivation (see for example the paper \cite{KMR} for a broad review). The moderate growth condition $(v)$, in addition to regularity, would provide $C^\M$-hypoellipticity for elliptic partial differential operators with coefficients in $C^\M$. Note that condition $(v)$ implies $(iv)$ and it also forces $C^\M \subset G^s$ for some $s$ (see for example Lemma 3.2.1 of \cite{Th}). By a result of Denjoy and Carleman, the class $C^\M$ with $\M$ satisfying $(i)-(iii)$, is quasianalytic, meaning, functions which are flat at a point are identically zero, if and only if
$$ \sum_{k=0}^\infty \frac{\M_k}{(k+1)\M_{k+1}} =\infty.$$
For example the weight sequence $M_k= \log^{sk}(k+e)$ produces a quasianalytic class if $0<s\leq 1$, and a non-quasianalytic class if $s>1$. A non-quasianalytic class always accepts a partition of unity. 
Similar to the notion of Gevrey Riemannian manifolds, we can define Denjoy-Carleman Riemannian manifolds (see \cite{F}). Examples of compact quasianalytic Riemannian manifolds can not be provided via a partition of unity. Instead, one can use the implicit function theorem for $C^\M$-mappings (see \cite{Ko79}), or for example take a compact analytic manifold that is embedded in the euclidean space, and for the metric consider a conformal factor that is in a desired quasianalytic class.  

\subsection{Gevrey hypoellipticity of Laplacian and Gevrey regularity of eigenfunctions}

A partial differential operator $P$ with smooth coefficients on an open set $U \subset \R^n$  is called hypoelliptic if $Pu \in C^\infty(U)$ implies that $u \in C^\infty(U)$. It is known that elliptic partial differential operators (in fact more generally, elliptic pseudodifferential operators) are hypoelliptic. Similarly, there is a notion of Gevrey hypoellipticity, requiring that whenever $Pu \in G^s$ then $u \in G^s$. When $s=1$, this property is the same as analytic hypoellipticity.  Boutet de Monvel and Kre\'e \cite{BK} proved that any elliptic partial differential operator with $s$-Gevrey coefficients is $G^s$ hypoellliptic (the case $s=1$ was already known). Therefore, an eigenfunction of the Laplacian of a Gevrey Riemannian manifold must be Gevrey. However, in this paper we need the following quantitative estimates. 
\begin{prop} \label{Hypo}
	Let $(M, g)$ ba compact connected boundaryless Riemannian manifold. Suppose $(M, g)$ belongs to the class $G^s$. Then every eigenfunction $\psi_\lambda$ of $\Delta_g$ also belongs to the class $G^s$. Moreover, for every $s$-Gevrey chart $(U, \phi)$ and every compact $K \subset U$, there exist $C$ and $A$ independent of $\lambda$ such that
	\begin{equation} \label{Hypo-good} \sup_{K}|\d^\alpha \psi_\lambda| \leq C e^{\sqrt{\lambda}} A^{|\alpha|} \alpha!^s \sup_U{| \psi_\lambda}|. \end{equation}
\end{prop}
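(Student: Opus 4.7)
The trick is to convert all the $\lambda$-dependence into an explicit factor $e^{\sqrt\lambda}$ by spectral calculus, so that what remains is a purely $\lambda$-independent Gevrey regularity statement about a fixed kernel. Since $\psi_\lambda$ is an eigenfunction of $\sqrt{-\Delta_g}$ with eigenvalue $\sqrt\lambda$, the functional calculus gives the identity
$$\psi_\lambda \;=\; e^{\sqrt\lambda}\,\big(e^{-\sqrt{-\Delta_g}}\psi_\lambda\big).$$
Denoting by $P(x,y)$ the Schwartz kernel of the bounded Poisson-type operator $e^{-\sqrt{-\Delta_g}}$ on $L^2(M,g)$, this reads
$$\partial^\alpha_x \psi_\lambda(x) \;=\; e^{\sqrt\lambda}\int_M \partial^\alpha_x P(x,y)\,\psi_\lambda(y)\,dV_g(y),$$
and hence
$$\sup_K|\partial^\alpha \psi_\lambda| \;\le\; e^{\sqrt\lambda}\Big(\sup_{x\in K}\int_M|\partial^\alpha_x P(x,y)|\,dV_g(y)\Big)\sup_U|\psi_\lambda|.$$
Proposition \ref{Hypo} therefore reduces to the purely $\lambda$-independent Gevrey estimate
$$\sup_{x\in K}\int_M |\partial^\alpha_x P(x,y)|\,dV_g(y) \;\le\; C\,A^{|\alpha|}|\alpha|!^s.$$

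\textbf{Gevrey regularity of the Poisson kernel.} For any $u$, the harmonic extension $v(x,t):=(e^{-t\sqrt{-\Delta_g}}u)(x)$ on $M\times(0,\infty)$ satisfies
$$(\partial_t^2 + \Delta_g)\,v \;=\; 0,$$
i.e.\ $v$ is harmonic for the product-metric Laplacian $L := \Delta_{M\times\R}$. The operator $L$ is uniformly elliptic with $G^s$ coefficients on $M\times\R$, since the Gevrey class is stable under direct products. For each fixed $y\in M$, the kernel $(x,t)\mapsto P_t(x,y)$ solves $LP_t(\cdot,y)=0$ on $M\times(0,\infty)$, and Boutet de Monvel--Kr\'ee Gevrey elliptic regularity applied on a fixed neighborhood $W$ of $K\times\{1\}$ bounded away from $\{t=0\}$ produces $\lambda$-independent constants $C$, $A$ depending only on $(M,g,s)$ such that
$$\sup_{x\in K}|\partial^\alpha_x P(x,y)| \;\le\; C A^{|\alpha|}|\alpha|!^s\,\sup_{(x',t')\in W'}|P_{t'}(x',y)|$$
uniformly in $y\in M$, where $W'$ is a slightly larger fixed neighborhood. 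Combining this with the subordination formula $e^{-t\sqrt{-\Delta_g}} = \int_0^\infty \mu_t(s)\,e^{s\Delta_g}\,ds$ and standard on-diagonal heat-kernel upper bounds for the compact Riemannian manifold $M$ shows $\sup_{W'\times M}|P_{t'}(x',y)|<\infty$. Hence $\sup_{x\in K,\,y\in M}|\partial^\alpha_x P(x,y)| \le C' A^{|\alpha|}|\alpha|!^s$, and integrating over the compact manifold $M$ yields the required $L^1_y$ bound.

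\textbf{Main obstacle.} The entire proof is reduced to a single application of Boutet de Monvel--Kr\'ee Gevrey elliptic regularity for the product Laplacian $\Delta_{M\times\R}$ on $M\times\R$. The delicate-looking factor $e^{\sqrt\lambda}$ is produced for free by the spectral identity, so no quantitative PDE estimate involving $\lambda$ is required at all. The only routine technicalities are (i) confirming that $M\times\R$ inherits the $G^s$-structure from $M$, and (ii) controlling $P_t(x,y)$ uniformly on a compact neighborhood of $t=1$, which is a standard application of subordination combined with heat-kernel bounds. The latter is where most books would verify the estimate but it is genuinely classical material; the conceptual novelty in this argument is the upfront elimination of $\lambda$ via the spectral identity.
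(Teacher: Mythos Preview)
Your approach and the paper's share the same core: pass to a harmonic function on $M\times\R$ and invoke Boutet de Monvel--Kr\'ee Gevrey elliptic regularity for the product Laplacian. The paper, however, takes a shorter route. It applies BK directly to the explicit harmonic extension
\[
h(x,t)=e^{\sqrt\lambda\,t}\psi_\lambda(x),
\]
which satisfies $(\partial_t^2+\Delta_g)h=0$ on $\tilde U=U\times(-2,2)$. The parametrix identity $h=Rh$ with $R$ having a $G^s$ kernel on $\tilde U\times\tilde U$ immediately gives
\[
\sup_{\tilde K}|\partial^\alpha h|\le C A^{|\alpha|}\alpha!^s\sup_{\tilde U}|h|,
\]
and unpacking $h$ yields the proposition. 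No spectral calculus, no Poisson kernel, no subordination or heat-kernel bounds are needed; the factor $e^{\sqrt\lambda}$ appears simply because $\sup_{\tilde U}|h|\le e^{2\sqrt\lambda}\sup_U|\psi_\lambda|$. Your route factors this same computation through the kernel $P_t(x,y)$ of $e^{-t\sqrt{-\Delta_g}}$ and then applies BK to $P_t(\cdot,y)$ for each $y$; this is correct in spirit but adds the auxiliary tasks of controlling $P_t$ uniformly in $y$ and justifying the uniformity of the BK constants.

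There is one genuine discrepancy. Your inequality
\[
\sup_K|\partial^\alpha\psi_\lambda|\le e^{\sqrt\lambda}\Big(\sup_{x\in K}\int_M|\partial^\alpha_x P(x,y)|\,dV_g(y)\Big)\sup_U|\psi_\lambda|
\]
is not what the integral representation gives: since $y$ ranges over all of $M$, the correct bound has $\sup_M|\psi_\lambda|$ on the right, not $\sup_U|\psi_\lambda|$. The global Poisson operator is nonlocal, so your argument cannot recover the chart-local estimate stated in the proposition. You would obtain the weaker inequality with $\sup_M|\psi_\lambda|$, which is still useful (and would suffice downstream after adjusting the normalization), but it is not the proposition as written. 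The paper avoids this precisely because the BK parametrix $R$ lives on the chart $\tilde U$, so the estimate stays local.
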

\begin{rema}
	The factor $e^{\sqrt{\lambda}}$ can be replaced with any $e^{\ep\sqrt{\lambda}}$ at the cost of allowing the constants $C$ and $A$ to be dependent on $\ep$ (certainly $A \geq c/\ep$).  
\end{rema}
\begin{rema} One could follow the argument of \cite{DF1} (see proof of Lemma 7.1) and rescale balls of radius $\lambda^{-1/2}$ to balls of radius one and then apply a  hypoellipticity  argument as in \cite[Page 176]{Ho} to obtain estimates of the form:
\begin{equation} \label{Hypo-bad} \sup_{K}|\d^\alpha \psi_\lambda| \leq C (\sqrt{\lambda}A)^{|\alpha|} \alpha!^s \sup_U{| \psi_\lambda}|. \end{equation}
Although estimates \eqref{Hypo-bad} are better than \eqref{Hypo-good} for small values of $|\alpha|$, but they are much worse when $|\alpha| \geq \sqrt{\lambda}$. For our purposes estimates \eqref{Hypo-good} play a key role. 
\end{rema}

\begin{proof}[\textbf{Proof of Proposition \ref{Hypo}}]
	We consider the function, 
	\begin{equation} \label{h} h(x, t) = e^{\sqrt{\lambda}t} \psi_\lambda(x), \end{equation}
	on the product manifold $\widetilde M= M \times \R$. Note that if we denote $\Delta_{\tilde g}= \d_t^2 + \Delta_g$ to be the Laplace-Beltrami operator associated to the metric $\tilde g$, defined as the product of $g$ and the Euclidean metric $e_0$ on $\R$, then $\Delta_{\tilde g} h =0$. In other words, $h$ is a harmonic extension of $\psi_\lambda(x)$ into $\widetilde M= M \times \R$. 
	
	Clearly, since $g \in G^s$, we have that $\tilde g \in G^s$ and $\Delta_{\tilde g}$ is an elliptic differential operator with coefficients in $G^s$. By Corollary 2.14 of \cite{BK}, we have $h \in G^s$, which implies that $\psi_\lambda \in G^s$.  To obtain the estimates \eqref{Hypo-good}, we shall use another result of \cite[Prop 2.13]{BK} which implies that if $P$ is an elliptic differential operator of order $2$ in $\Omega$ with coefficients in $G^s$, then there exists an elliptic pseudodifferential operator $Q$  of order $-2$ such that $R=I - QP$ is a smoothing operator whose distributional kernel $R(x, y)$ belong to the Gevrey class $G^s(\Omega  \times \Omega)$.  Now let $(U, \phi)$ be an arbitrary Gevrey chart of $M$ and consider $P = \Delta_{\tilde g}$ on $ \tilde U = U \times (-2, 2)$. Since $Ph=\Delta_{\tilde g} h=0$, we get $R h = h$. For an arbitrary compact subset $K \subset U$, we define the compact subset $\tilde K = K \times [-1, 1]$ of $\tilde U$. Then from $h =Rh = \int R(x, y) h(y) dy$ and the properties of $R$, we obtain
	$$\sup_{\tilde K}|\d^\alpha h| \leq C  A^{|\alpha|} \alpha!^s \sup_{\tilde U}{|h|}.$$
	Estimate \eqref{Hypo-good} follows from this and the definition \eqref{h} of $h$ in terms of $\psi_\lambda$.

	\end{proof}

\subsection{Denjoy-Carleman hypoellipticity} As mentioned earlier, if we add the moderate growth assumption $(v)$ to a regular sequence $\M$, then any elliptic partial differential operators with coefficients in $C^\M$ is $C^\M$-hypoelliptic. This result follows from Theorem 4.1 of Albanese, Jornet, and Oliaro \cite{Albanese} (see also Theorem 6.1 of \cite{F} for systems), which states that if $P$ is a linear differential operator (not necessarily elliptic) on $U \subset \R^n$ with coefficients in $C^\M(U)$, then 
$$\text{WF}_\M(u) \subset \text{WF}_\M(Pu) \cup \Sigma, $$
where $\Sigma$ is the characteristic set of $P$ and $\text{WF}_\M$ denotes the wavefront set associated to the class $C^\M$.\footnote{A result of this type was first proved by H\"ormander for operators with analytic coefficients.}  If $Pu\in C^\M$ and $P$ is elliptic, then we obtain that $\text{WF}_\M(u) = \emptyset$, thus $u \in C^\M(U)$. In particular, this implies that an eigenfunction of a $C^\M$-Riemannian manifold $(M, g)$ must be $C^\M$ regular. In fact, we have:
\begin{prop} \label{HypoM}
	Let $(M, g)$ ba compact connected boundaryless Riemannian manifold that belongs to the class $C^\M$, where $\M$ is a regular sequence with moderate growth. Then every eigenfunction $\psi_\lambda$ of $\Delta_g$ belongs to the class $C^\M$. More precisely, for every $C^\M$ chart $(U, \phi)$ and every compact $K \subset U$, there exist $C$ and $A$ independent of $\lambda$ such that
	\begin{equation} \label{HypoM-good} \sup_{K}|\d^\alpha \psi_\lambda| \leq C e^{\sqrt{\lambda}} A^{|\alpha|} \alpha! \; \M_{|\alpha|} \; \sup_U{| \psi_\lambda}|. \end{equation}
\end{prop}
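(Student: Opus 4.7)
The plan is to imitate, almost verbatim, the proof of Proposition \ref{Hypo}, replacing each Gevrey input by its Denjoy--Carleman counterpart. As before, I would form the harmonic extension
\[ h(x,t) = e^{\sqrt{\lambda}\,t}\,\psi_\lambda(x) \]
on the product manifold $\widetilde{M}=M\times\mathbb{R}$ equipped with the product metric $\tilde g = g \oplus e_0$. Since $g\in C^{\M}$ and the Euclidean metric is analytic (hence in every class $C^{\M}$), the product metric $\tilde g$ lies in $C^{\M}$, and the associated Laplace--Beltrami operator $\Delta_{\tilde g}=\partial_t^2+\Delta_g$ is elliptic of order $2$ with coefficients in $C^{\M}$. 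By construction $\Delta_{\tilde g}h=0$, so all $\lambda$-dependence is pushed into the crude pointwise bound $\sup_{\tilde U}|h|\le e^{\sqrt{\lambda}}\sup_U|\psi_\lambda|$ on any tube $\tilde U = U\times(-2,2)$.

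The central step is to produce a $C^{\M}$ parametrix for $\Delta_{\tilde g}$: an elliptic pseudodifferential operator $Q$ of order $-2$ such that $R:=I-Q\Delta_{\tilde g}$ is smoothing and has distributional kernel $R(x,y)$ belonging to $C^{\M}(\tilde U\times\tilde U)$, i.e.\ satisfying
\[ \sup_{\tilde K\times\tilde U}\bigl|\partial_x^\alpha\partial_y^\beta R(x,y)\bigr| \le C\,A^{|\alpha|+|\beta|}\,\alpha!\,\beta!\;\M_{|\alpha|}\,\M_{|\beta|} \]
for every compact $\tilde K\subset\tilde U$. This is precisely the $C^{\M}$-version of \cite[Prop.\ 2.13]{BK}, and it is exactly here that the \emph{moderate growth} hypothesis (v) is indispensable, since the standard Borel--type summation procedure that builds a parametrix requires the symbol class $S^{-\infty}_{\M}$ to be closed under the composition and asymptotic summation used in the classical construction; condition (v) is the combinatorial input that makes the analogous constants for $\M$-type Leibniz estimates finite. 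For regular weight sequences with moderate growth, such a calculus is available in the literature (e.g.\ in the Denjoy--Carleman microlocal framework of \cite{Albanese} and in the systems framework of \cite{F}), and the parametrix produced there has a kernel of class $C^{\M}$ off the diagonal and in the smoothing remainder.

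Granted such an $R$, the identity $\Delta_{\tilde g}h=0$ yields
\[ h(x) = Rh(x) = \int R(x,y)\,h(y)\,dy \]
on $\tilde U$. Differentiating under the integral and using the $C^{\M}$ kernel bounds on the compact set $\tilde K = K\times[-1,1]\subset\tilde U$, one gets
\[ \sup_{\tilde K}\bigl|\partial^\alpha h\bigr| \le C\,A^{|\alpha|}\,\alpha!\,\M_{|\alpha|}\,\sup_{\tilde U}|h|. \]
Restricting to $t=0$, applying $\partial_x^\alpha$, and using $\sup_{\tilde U}|h|\le e^{\sqrt{\lambda}}\sup_U|\psi_\lambda|$ gives the desired bound \eqref{HypoM-good}.

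The main obstacle is the parametrix construction: unlike the Gevrey case, where \cite{BK} provides the statement off the shelf, here one has to know (or verify) that a pseudodifferential calculus with symbols of class $C^{\M}$ exists and produces smoothing remainders with $C^{\M}$ kernels, and this genuinely requires the moderate growth condition (v). Once that ingredient is in place, the $\lambda$-dependence is handled identically to the Gevrey case by the exponential harmonic extension trick, so no further quantitative work is needed.
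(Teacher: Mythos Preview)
Your proposal is correct and follows essentially the same approach as the paper: both use the harmonic extension $h(x,t)=e^{\sqrt{\lambda}t}\psi_\lambda(x)$ on $M\times\mathbb{R}$ to convert the eigenfunction into a solution of an elliptic equation with $\lambda$-independent coefficients, and then invoke $C^{\M}$-hypoellipticity. The only minor difference is in how the quantitative $C^{\M}$-estimate for $h$ is sourced: you argue via a $C^{\M}$-parametrix with $C^{\M}$-regular smoothing remainder kernel (the direct analogue of the Boutet de Monvel--Kr\'ee argument), whereas the paper simply appeals to the \emph{proof} of Theorem~4.1 in \cite{Albanese}, extracting the needed bounds from the Fourier-side estimates established there rather than from an explicit parametrix statement.
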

The proof is very similar to the one of Prop \ref{Hypo}. Namely, we only need to show that the harmonic extension $h$ of $\psi_\lambda$ to $M \times \R$, satisfies $C^\M$-hypoellipticity estimates. However, this follows from the proof of Theorem 4.1 of \cite{Albanese}, from which equivalent estimates can be driven for the Fourier transform of $h$.

\section{The measure of sublevel sets of eigenfunctions} 
In the article \cite{LMa}, in the course of the proof of a propagation of smallness result, the authors find the following lower bounds on the supremum of $|\psi_\lambda|$ over an arbitrary measurable subset $E \subset M$:
\begin{equation} \label{propogation}
\sup_E|\psi_\lambda | \geq \frac{1}{K} \sup_M|\psi_\lambda| \left ( \frac{|E|}{K|M|}\right )^{K \sqrt{\lambda}}.
\end{equation}
Here, $| \cdot |$ refer to the Riemannian volume measure associated to $g$ and the constant $K>1$ is independent of $(\lambda, \psi_\lambda)$ and $E$. 

The estimates \eqref{propogation} resemble the Remez inequalities for polynomials: let $p_n(x)$ be a polynomial of degree $n$ on an interval $I \subset \R$. Then for any measurable subset $E \subset I$, 
\begin{equation*} \label{Remez}
\sup_E|p_n | \geq  \sup_I|p_n| \left ( \frac{|E|}{4|I|}\right )^{n}.
\end{equation*}
The lower bounds \eqref{propogation} also strengthen \emph{the tunneling estimates} by allowing $E$ to be an arbitrary measurable set that can depend on $\lambda$,  instead of an open set independent of $\lambda$, which is assumed in standard tunneling estimates in semiclassical analysis. 

If in \eqref{propogation}, we let $E$ to be the sublevel set $\{ |\psi_\lambda| \leq t \}$, and if we assume that $\psi_\lambda$ is $L^\infty (M)$ normalized, then we obtain (see also Lemma 4.2 and Remark 4.3 of \cite{LMa}):
\begin{equation}\label {sublevel}
| \{ |\psi_\lambda| \leq t \} | \leq K_1 t^{\frac{1}{K\sqrt{\lambda}}}, 
\end{equation}
where $K_1$ and $K$ are independent of $\lambda$ and $t$. 
Interestingly, the bounds \eqref{sublevel} are analogous to P\'olya's inequality on the size of the sublevel sets of polynomails with leading coefficient equal to one:
$$ | \{x \in \R; \, |p_n(x)| \leq t \} | \leq 4 \left ( \frac{t}{2} \right )^{\frac{1}{n}}. $$
\begin{rema}
	The bounds \eqref{sublevel} can also be obtained directly from Lemma 4.2 of \cite{LMa}, which concerns the measure of sublevel sets of harmonic functions in terms of their doubling indices.   
\end{rema}

\section{Proof in the Gervey case}
In this section we prove Theorem \ref{main} which concerns the Gevrey case. Since the arguments for the qusi-analytic, or more generally the Denjoy-Carleman cases, are notationally complicated and less intuitive, we postpone  their presentation to the last section of the article. 
\subsection{Reduction to a local problem}
We cover $M$ with finitely many (independent of $\lambda$)  $s$-Gevrey charts $\{(U_\alpha, \phi_\alpha)\}_{\alpha \in \mathcal F}$ such that each $U_\alpha$ is identified (under $\phi_\alpha$) with the $n$-dimensional Euclidean ball $$B_2^n= \{x \in \R^n; \; |x|<2\},$$ and in addition we require that $\{ \phi_\alpha^{-1}(B^n_1)\}_{\alpha \in \mathcal F}$ covers $M$. To prove our main theorem it suffices to prove that for each $\alpha \in \mathcal F$, there exists $C_\alpha$ such that 
$$\mathcal{H}^{n-1} (Z_{\psi_\lambda} \cap \phi_\alpha^{-1}( B^n_1)) \leq C_\alpha {\lambda}^{\frac{s}{2}}, $$
as this would imply that, with $C=\sum_{\alpha \in \mathcal F}C_\alpha$,
$$ \mathcal{H}^{n-1} (Z_{\psi_\lambda}) \leq \sum_{\alpha \in \mathcal F} \mathcal{H}^{n-1} (Z_{\psi_\lambda} \cap \phi_\alpha^{-1}( B^n_1)) \leq C {\lambda}^{\frac{s}{2}}.$$

Fix a chart $(U, \phi)$ in the covering specified above. From here on, we consider the local eigenfunction $ \tilde{\psi}_\lambda = \psi_\lambda \circ \phi^{-1}$ on the ball $B^n_2$ and estimate the size of its nodal in the ball $B^n_1$. By abuse of notation we will call this local eigenfunction $\psi_\lambda $ instead of $\tilde \psi_\lambda$. 

\subsection{Crofton's formula and the proof of Theorem \ref{main}}

The nodal set $Z_{\phi_\lambda}$ is not necessarily a smooth hypersurface but the $(n-1)$-Haussdorf measure of its singular set is zero.   Thus Crofton's formula can be applied to calculate $\mathcal{H}^{n-1} (Z_{\psi_\lambda} \cap  B^n_1)$ in terms of the number of intersection points of the nodal set with lines going through $x \in B^n_1$ in the direction $\xi \in S^{n-1}$. In fact, we have the following upper bound  (\cite{F}, \cite{Ze2}, \cite{DF1})
\begin{equation}\label{Crofton} \mathcal{H}^{n-1} (Z_{\psi_\lambda} \cap  B^n_1) \leq  C_1 \iint_{B^n_1 \times S^{n-1} } {n}(x, \xi) \, dx d\xi, \end{equation}
where for each $(x, \xi) \in B^n_1 \times S^{n-1}$,  $n(x, \xi)$ is the number of intersection points of the line $x + \R \xi$ with the nodal set $Z_{\psi_\lambda}$, in the ball $B^n_1$. The constant $C_1$ depends only on $n$. Note that $dx d\xi$ is the Liouville measure $d \mu$ on $S^*B^n_1 = B^n_1 \times S^{n-1}$. The function $n(x, \xi)$ can take the value $+\infty$ but only on a set of measure zero. In fact $n \in L^1(S^*B^n_1, d \mu)$. Now for each $m \in \mathbb N$, we denote 
\begin{equation}\label{Am}
A_m = \{ (x, \xi) \in S^*B^n_1; \; (m\sqrt{\lambda})^{s} \leq  n(x, \xi) < ((m+1)\sqrt{\lambda})^{s} \}.
\end{equation}
Because $n(x+t \xi, \xi) = n(x, \xi)$, the measurable subsets $A_m$ are unions of lines.

The following lemma on the size of $A_m$ is crucial in the proof of Theorem \ref{main}.

\begin{lemm} \label{main lemma}There exist $c$ and $C_2$ both positive, dependent only on $(M, g)$ and $s$, such that for all $m \in \mathbb N$ and $\lambda \geq 1$, we have,
	$$ \mu(A_m)= \iint_{A_m} dx d\xi \leq C_2  e^{-c{m}}.$$
\end{lemm}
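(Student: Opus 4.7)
The plan is to combine Lagrange interpolation with the Gevrey derivative estimate of Proposition~\ref{Hypo} and the sublevel set bound~\eqref{sublevel}. After normalizing $\sup_M|\psi_\lambda|=1$, I fix a direction $\xi\in S^{n-1}$ and write $L_m^\xi\subset\xi^\perp$ for the set of perpendicular projections of lines in direction $\xi$ whose intersection with $B^n_1$ carries at least $N:=\lceil(m\sqrt{\lambda})^s\rceil$ zeros of $\psi_\lambda$. Since $A_m$ is translation-invariant along $\xi$ and its slices in $B^n_1$ have length at most $2$, one has $\mu(A_m)\leq 2\int_{S^{n-1}}|L_m^\xi|\,d\xi$, so the task reduces to a uniform bound $|L_m^\xi|\leq Ce^{-cm}$.

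For each line $\ell\in L_m^\xi$, pigeonhole (partition $\ell\cap B^n_1$ into intervals of length $\delta$) produces a subinterval $I_\ell$ of length $\delta:=c_0/(m\sqrt{\lambda})^{s-1}$ containing at least $k:=c_1 m\sqrt{\lambda}$ zeros of $\psi_\lambda$, valid as long as $2c_1\leq c_0$. Lagrange interpolation at these $k$ zeros, combined with the Gevrey bound $|(\xi\cdot\nabla)^k\psi_\lambda|\leq Ce^{\sqrt{\lambda}}A^k k!^s$ from Proposition~\ref{Hypo} (after absorbing a harmless multinomial factor into $A$), yields on $I_\ell$
\[
|\psi_\lambda|\leq\frac{\delta^k}{k!}\sup_{I_\ell}|(\xi\cdot\nabla)^k\psi_\lambda|\leq Ce^{\sqrt{\lambda}}(A\delta)^k k!^{s-1}.
\]
Choosing $c_0, c_1$ small enough that $(A\delta)(k/e)^{s-1}\leq e^{-2}$, Stirling collapses the right-hand side to $Ce^{\sqrt{\lambda}-2k}\leq e^{-cm\sqrt{\lambda}}$ for $m\geq m_0$, with $c>0$ depending only on $A$ and $s$. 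Hence $I_\ell$ lies in the sublevel set $E:=\{|\psi_\lambda|\leq e^{-cm\sqrt{\lambda}}\}$, and \eqref{sublevel} gives $|E|\leq K_1 e^{-cm/K}$.

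The subintervals $I_\ell$ for distinct $\ell\in L_m^\xi$ have pairwise disjoint projections onto $\xi^\perp$, so Fubini yields $|L_m^\xi|\cdot\delta\leq |E|$, and hence $|L_m^\xi|\leq K_1 e^{-cm/K}/\delta$. As written, this carries a polynomial factor $1/\delta\sim(m\sqrt{\lambda})^{s-1}$ which would degrade the Hausdorff estimate to $\lambda^{s-1/2}$ instead of the claimed $\lambda^{s/2}$; removing it is the main obstacle. I plan to do so by extracting on each line not one but a family of $\sim N/k$ disjoint groups of $k$ consecutive zeros with spans $\Delta_j$: since $\sum_j\Delta_j\leq 2$, pigeonhole forces at least $N/(2k)\sim(m\sqrt{\lambda})^{s-1}$ of these groups to satisfy $\Delta_j\leq 4k/N$, each producing a short subinterval lying in $E$. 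Running the Fubini step on the union of all such subintervals across lines should replace the single $\delta$ on the left by a $\lambda$-independent total length, yielding $\mu(A_m)\leq C_2 e^{-cm}$. Making this covering step rigorous in the face of clustered versus well-separated zero distributions is the delicate point where I expect the argument to require care.
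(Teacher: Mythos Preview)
Your ingredients are exactly the ones the paper uses, and you have correctly isolated the central difficulty: the naive Fubini step leaves a factor $1/\delta\sim(m\sqrt\lambda)^{s-1}$ that must be removed. However, your proposed fix does not close the gap. Grouping the $N$ zeros into $N/k$ consecutive blocks of $k$ and selecting those with span $\Delta_j\le 4k/N$ does produce $\gtrsim(m\sqrt\lambda)^{s-1}$ short subintervals on which smallness holds, but you have no lower bound on the measure of their union: if all $N$ zeros cluster near a single point, every $\Delta_j$ is essentially zero, and even after enlarging each span to an interval of length $4k/N$ the enlarged intervals coincide, so the union still has measure $\sim 4k/N\sim\delta$. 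You are then back to $|L_m^\xi|\cdot\delta\le|E|$, the very estimate you set out to improve. The obstruction you flagged (``clustered versus well-separated'') is not a detail to be cleaned up; it is the heart of the matter.

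The paper resolves it by a multiscale dichotomy. One proves (Lemma~\ref{I}) that for each $(x,\xi)\in A_m$ there is an integer $\ell\ge1$ and a set $I^*\subset I_{x,\xi}$ with $|I^*|\ge|I_{x,\xi}|/4^\ell$ on which $|\psi_\lambda|\le C_3\, e^{(1-c_0 m\ell)\sqrt\lambda}$. The index $\ell$ encodes the clustering scale: well-spread zeros give $\ell=1$, while concentrated zeros force $\ell$ large --- a smaller $I^*$, but compensated by a smallness exponent that grows \emph{linearly} in $\ell$. The main lemma then follows by stratifying $A_m=\bigcup_\ell C_{m,\ell}$ according to $\ell$, running your Fubini/sublevel argument on each stratum to obtain $\mu(C_{m,\ell})\le C\,4^\ell e^{-c_1 m\ell}$, and summing the geometric series in $\ell$ (convergent once $m$ exceeds a fixed threshold). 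The proof of Lemma~\ref{I} is itself an iterated pigeonhole: one partitions $I_{x,\xi}$ into $N_0\sim(m\sqrt\lambda)^{s-1}$ equal pieces and asks whether a positive fraction carry $\gtrsim p_0:=n_0/N_0$ zeros; if so one stops with $\ell=1$, and if not the zeros concentrate on at most $N_0/4$ pieces carrying at least $n_0/2$ zeros in total, and one repeats on that subcollection. Each iteration quarters the number of surviving intervals while only halving the zero count, so the density $p_k$ at least doubles; the procedure terminates in finitely many steps with $\ell$ determined by the depth reached. What is missing from your outline is precisely this coupling of interval size $4^{-\ell}$ to smallness strength $e^{-c_0 m\ell\sqrt\lambda}$, together with the summation over $\ell$.
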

Before we get into the proof of this lemma let us present how one can apply this lemma and easily obtain a proof for Theorem \ref{main}. 

\begin{proof}[\textbf{Proof of Theorem \ref{main}}] Using \eqref{Crofton} and  the definition of $A_m$, we have 
\begin{align*}
\mathcal{H}^{n-1} (Z_{\psi_\lambda} \cap  B^n_1) & \leq  C_1 \iint_{S^*B^n_1 } n(x, \xi) \, dx d\xi \\
                                                           								& = C_1 \sum_{m=0}^\infty  \iint_{A_m } n(x, \xi) \, dx d\xi \\
                                                           								& \leq C_1 \sum_{m=0}^\infty \mu(A_m) \sup_{(x, \xi) \in A_m} n(x, \xi) \\
                                                           								& \leq C_1 \sum_{m=0}^\infty \mu(A_m) ((m+1) \sqrt{\lambda})^s \\
                                                           								& \leq C_1 C_2 \lambda^{\frac{s}{2}} \sum_{m=0}^\infty  (m+1)^s e^{-cm} \leq C \lambda^{\frac{s}{2}}. 
\end{align*}

\end{proof}
It remains to prove the main lemma.

\section{Proof of estimates on the size of $A_m$}
In this section we prove Lemma \ref{main lemma}. As we shall see, this lemma would follow from another key lemma which we now state but prove later. Throughout this section we assume that $\lambda \geq 1$ and
$$\sup_{B^n_2} |\psi_\lambda| =1. $$
Recall that 
$$A_m = \{ (x, \xi) \in S^*B^n_1; \; (m\sqrt{\lambda})^{s} \leq  n(x, \xi) < ((m+1)\sqrt{\lambda})^{s} \}.$$ 

\begin{lemm} \label{I}
	For each $(x, \xi) \in A_m$, consider the line segment 
	$$I_{x, \xi}:= (x+ \R\xi)\cap B^n_1, $$ equipped with the $1$-dimensional Lebesgue measure $\mathcal L^1$ induced from $\R$. Then, there exist $\ell \in \N^{>0}$ and an open subset $I^* \subset I_{x, \xi}$ with $\mathcal L^1 (I^*) \geq \frac{\mathcal L^1 (I_{x, \xi})}{4^{\ell}}$, such that
	$$ \sup_{I^*} |\psi_\lambda|  \leq C_3 e^{(1-c_0m \ell )\sqrt{\lambda}}.$$
	The constants $c_0$ and $C_3$ are positive and only depend on $(M, g)$ and $s$. 
	\end{lemm}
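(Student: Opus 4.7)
The plan is to reduce Lemma \ref{I} to a one-dimensional estimate along the line $I_{x,\xi}$, and then combine a pigeonhole argument on the zeros of the restricted eigenfunction with Lagrange interpolation and the Gevrey derivative estimates of Proposition \ref{Hypo}. I would set $f(t):=\psi_\lambda(x+t\xi)$, identify $I_{x,\xi}$ with a real interval $\tilde I$ of length $L:=\mathcal L^1(I_{x,\xi})$, and use the hypothesis $(x,\xi)\in A_m$ to guarantee at least $N:=\lfloor (m\sqrt\lambda)^s\rfloor$ zeros of $f$ inside $\tilde I$. A multinomial expansion of the directional derivative $(\xi\cdot\nabla)^k$ together with Proposition \ref{Hypo} would yield the uniform one-dimensional Gevrey bound
\[|f^{(k)}(t)|\leq Ce^{\sqrt\lambda}\,A^k\,k!^s,\quad t\in \tilde I,\; k\in\N,\]
with $C,A$ depending only on $(M,g,s)$.

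Next, for a positive integer $\ell$ to be chosen below, I would partition $\tilde I$ into $4^\ell$ equal closed subintervals of length $L/4^\ell$. By pigeonhole at least one of them, call it $I^*$, contains at least $N':=\lceil N/4^\ell\rceil$ zeros $z_1,\ldots,z_{N'}$ of $f$ (provided $4^\ell\leq N$). Since $f$ vanishes at these $N'$ distinct points of $I^*$, the Hermite remainder formula from Lagrange interpolation gives, for each $t\in I^*$,
\[f(t)=\frac{f^{(N')}(\tau_t)}{N'!}\prod_{j=1}^{N'}(t-z_j),\qquad \tau_t\in I^*.\]
Combining this with $|t-z_j|\leq L/4^\ell$, the Gevrey bound on $f^{(N')}$, and Stirling's inequality $N'!\leq (N')^{N'}$, one would obtain
\[\sup_{I^*}|f|\leq Ce^{\sqrt\lambda}\left(\frac{AL\,(N')^{s-1}}{4^\ell}\right)^{N'}.\]

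The final step and the main obstacle is to choose $\ell$ so that the base $AL(N')^{s-1}/4^\ell$ is at most $\tfrac12$ while the exponent $N'$ is at least a positive multiple of $m\ell\sqrt\lambda$, which would convert the bound into $\sup_{I^*}|f|\leq C_3 e^{(1-c_0 m\ell)\sqrt\lambda}$. The tension is that increasing $\ell$ shrinks the interpolation base (good) but also shrinks $N'$ (bad). I expect that for $s=1$ one may take $\ell$ equal to a specific constant depending only on $A$ and $L$, so that the factor $\ell$ in the exponent is absorbed into $c_0$; for $s>1$ the natural one-shot calibration is $\ell\sim (s-1)\log_4(m\sqrt\lambda)$, but a single application of the interpolation remainder appears to produce an exponent of order $m$ rather than the sharper $m\ell$. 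To extract the full factor $m\ell$ I would likely iterate the pigeonhole-interpolation procedure on $I^*$, at each stage exploiting the sharper sup-norm bound obtained at the previous stage through a Cauchy-type interior derivative estimate valid in the Gevrey category; this is where the main technical work of the lemma should lie.
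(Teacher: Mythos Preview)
Your setup through the Lagrange remainder bound is correct and matches the paper's Proposition~6.2. The gap is in the final paragraph: the proposed iteration mechanism does not exist. There is no ``Cauchy-type interior derivative estimate valid in the Gevrey category'' that lets you convert a small sup-norm of $f$ on $I^*$ into improved bounds on $|f^{(k)}|$ there. The Gevrey derivative bounds \eqref{Hypo-good} come from global elliptic hypoellipticity, not from local complex-analytic interpolation; for $s>1$ a Gevrey function can be tiny on an interval while its high derivatives remain as large as the a~priori bound allows. So once you have applied the remainder estimate once, you have no way to bootstrap, and you are stuck with an exponent of order $m\sqrt\lambda$ paired with $|I^*|=L/4^\ell$ where $4^\ell\sim (m\sqrt\lambda)^{s-1}$. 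Tracing this through the proof of Lemma~\ref{main lemma} gives $\mu(A_m)\lesssim (m\sqrt\lambda)^{s-1}e^{-cm}$, and then Theorem~\ref{main} degrades to $\lambda^{s-\frac12}$ instead of $\lambda^{s/2}$.

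The paper's argument avoids this by never trying to improve derivative bounds. Instead of a single pigeonhole subinterval, it partitions $I_{x,\xi}$ into $N_0\sim (m\sqrt\lambda)^{s-1}$ equal pieces so that the \emph{average} zero count per piece is $p_0\sim m\sqrt\lambda$, and then runs a dichotomy on the \emph{distribution} of zeros. If a positive fraction of the $N_0$ pieces each carry $\gtrsim p_0$ zeros, one takes $I^*$ to be their union, so $|I^*|\ge L/4$ and $\ell=1$; the remainder bound on each piece already gives the exponent $\sim m\sqrt\lambda$. If instead the zeros are concentrated in a small subcollection $S_1$, then each $I_j$ with $j\in S_1$ carries $p_1\gg p_0$ zeros; one subdivides those $I_j$ further into $\sim p_1/p_0$ equal parts and repeats. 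The extra factor $\ell$ in the exponent comes not from sharper derivative bounds but from the fact that, after $k$ concentration steps, the good sub-subintervals have length shrunk by an additional factor $\sim p_{k-1}/p_0\ge 2^{k-1}$ while still containing $\gtrsim p_0$ zeros, so the base $A|J|\,p_0^{s-1}$ in the remainder estimate is smaller by that same factor. In short: the iteration is over how the zeros cluster, and each level of clustering buys a smaller interval for the \emph{same} $p_0$ zeros, which is what produces the $m\ell$ in the exponent.
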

\begin{rema}
	We shall use $\mathcal L^n$ for the $n$-dimensional Lebesgue measure, however, sometimes we simply use $| \cdot |$. 
\end{rema}

\begin{proof}[\textbf{Proof of Lemma \ref{main lemma}}] We only need the result of Lemma \ref{I}. In fact, the takeaway of that lemma is that for each $(x, \xi) \in A_m$, there exists $\ell \in \N^{>0}$, such that
		\begin{equation} \label{cor} \mathcal L^1 ( I_{x, \xi} )\leq 4^{\ell} \, \mathcal L^1 \left ( I_{x, \xi} \cap \left \{ z \in B^n_1; \; |\psi_\lambda(z) | \leq C_3 e^{(1-c_0m \ell) \sqrt{\lambda}} \right \}\right). \end{equation}
	Before giving the proof let us set some notations. For each $m \in \N$ and $\ell \in \N^{>0}$, we denote the $\mu$-measurable sets
	$$C_{m,\ell}= \{(x, \xi) \in A_m; \quad  \text{\eqref{cor} holds} \}. $$
	By Lemma \ref{I}, 
	$$A_m = \bigcup_{\ell=1}^\infty C_{m , \ell}.$$ 
	We also denote
	\begin{equation*} L_{m, \ell} = \left \{ z \in B^n_1; \; |\psi_\lambda(z) | \leq C_3 e^{(1-c_0m \ell) \sqrt{\lambda}} \right \}, \end{equation*} and 
	$$ B_{m, \ell} = C_{m,\ell} \cap ( L_{m, \ell} \times S^{n-1}).$$
	Note that $B_{m, \ell}$ is a $\mu$-measurable subset of $C_{m, \ell}$. Since by the powerful estimate \eqref{sublevel}, one has
	$$ \mathcal L ^n (L_{m, \ell}) \leq C_4 e^{-\frac{c_0m\ell}{K}},$$
	by letting $c_1= \frac{c_0}{K}$, we get
	$$ \mu(B_{m, \ell}) \leq C_5 e^{-c_1m\ell}.$$
	We claim that
	\begin{equation} \label{claim0} \mu(C_{m, \ell}) \leq 4^{\ell} \mu (B_{m, \ell}). \end{equation}
	If we know this estimate, then 
\begin{align*}
\mu(A_m) \leq \sum_{\ell=1}^\infty\mu(C_{m, \ell}) &  \leq \sum_{\ell=1}^\infty 4^{\ell}\mu(B_{m, \ell}) 
 \leq C_5 \sum_{\ell=1}^\infty 4^{\ell}e^{-c_1m \ell}   \leq    C_5 \sum_{\ell=1}^\infty e^{(2-c_1m) \ell}.                                                                               
\end{align*} 
Therefore, for $m \geq \frac{4}{c_1}$, by the geometric sum we get
$$ \mu(A_m) \leq C_6  e^{2-c_1m} \leq C_6  e^{-\frac{c_1}{2}m}, $$
which provides the estimate of Lemma \ref{main lemma} with $c=\frac{c_1}{2}$. For $m < \frac{4}{c_1}$, Lemma \ref{main lemma} is trivial.

It remains to prove the claim \eqref{claim0}. We shall only use \eqref{cor} and Fubini's Theorem. For simplicity, we drop the sub-indices $(m, \ell)$ from \eqref{claim0} and denote $C= C_{m, \ell}$ and $B=B_{m, \ell}$.  Let $\mathds{1}_{C}(x, \xi)$ be the indicator function of $C$. Since $C$ is $\mu$-measurable, by Fubini's Theorem, for almost every $\xi \in S^{n-1}$ the function $\mathds{1}_{C}(\cdot, \xi)$ is Lebesgue integrable on $B^n_1$ and,
	$$ \mu(C)= \iint_{B^n_1 \times S^{n-1}} \mathds{1}_{C}(x, \xi) d \mu = \int_{S^{n-1}} \left ( \int_{B^n_1}  \mathds{1}_{C}(x, \xi) dx \right ) \, d\xi = \int_{S^{n-1}} \mathcal L^{n} \left ( {(C)_\xi} \right ) \, d\xi.$$
	Here, $(C)_\xi = \{x \in B^n_1; (x, \xi) \in C \}$, which is measurable for almost every $\xi$.  We can write the same identity for $\mu(B)$, hence to prove \eqref{cor}, it suffices to prove that 
	\begin{equation*}
	\mathcal L^{n} ( (C)_\xi )\leq 4^{\ell} \, \mathcal L^{n} ((B)_\xi),  \quad \text{almost every  $\xi \in S^{n-1}$}.
	\end{equation*}
	Note that $(C)_\xi$ is a union of lines parallel to $\xi$ because by the definition \eqref{Am} of $A_m$, whenever $(x, \xi) \in A_m$, one has $ (x + \R \xi ) \cap B^n_1 \subset A_m$ and thus this property would also hold for $C=C_{m, \ell}$, i.e., $(x + \R \xi ) \cap B^n_1 \subset C$ whenever $(x, \xi) \in C$. We also note that if $\mathcal L^{n} ( (C)_\xi )=0$,  there is nothing to prove because then $\mathcal L^{n} ((B)_\xi) =0$ and the above estimate would be trivial.
	
	Now, given $\xi$ with $\mathcal L^{n} ( (C)_\xi ) \neq 0$, we change coordinates in $B^n_1$ by a rotation $x= R(y)$ so that $\xi= R(e_1)$.  Then,
	$$ \mathcal L^{n}  ( (C)_\xi)=\int_{B^n_1}  \mathds{1}_{(C)_\xi}(x) dx = \int_{B^{n}_1}  \mathds{1}_{R^{-1}((C)_\xi)}(y) dy.$$
	Obviously, the set $R^{-1}((C)_\xi)$ is measurable for almost all $\xi$ and is a union of lines parallel to the $y_1$-axis. Using Fubini's Theorem again, we can write the last integral as
	$$\mathcal L^{n}  ( (C)_\xi )= \int_{B^{n}_1}  \mathds{1}_{R^{-1}((C)_\xi)}(y) dy = \int_{B^{n-1}_1} \mathcal L^1 \left ({ (R^{-1}((C)_\xi))_{y'}} \right ) dy', $$
	where $(R^{-1}((C)_\xi))_{y'} = \{y_1 \in [-1, 1]; \; y=(y_1, y') \in R^{-1}((C)_\xi) \}$ is Lebesgue measurable in $[-1, 1]$ for almost every $y' \in B^{n-1}_1$. Again, a similar identity holds for $\mathcal L^{n}  ( (B)_\xi )$. Therefore, in order to prove \eqref{claim0}, we only need to show that 
	$$ \mathcal L^1 \left ({ (R^{-1}((C)_\xi))_{y'}} \right ) \leq 4^\ell \mathcal L^1 \left ({ (R^{-1}((B)_\xi))_{y'}} \right ).$$
	However, this is precisely \eqref{cor}. 
	\end{proof}
In the next section we prove Lemma \ref{I}, which is the technical part of the paper.

\section{Proof of smallness of $\psi_\lambda$ on line segments containing large number of zeros}
We will use the following simple but important lemma.
\begin{lemm}\label{Rolle's}
Let $u$ be a smooth real-valued function on an interval $I \subset \R$. Assume $u$ has at least $p$ many zeros (counting multiplicities)
in $I$. Then,
$$ \sup_{I} | u| \leq \frac{|I|^p}{p!} \sup_{I} |u^{(p)}|,$$
where $|I|$ denotes the length of the interval $I$. 
 \end{lemm}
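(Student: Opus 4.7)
The plan is to deduce the inequality from the Cauchy/Hermite remainder formula for polynomial interpolation. Let $x_1, \ldots, x_p$ denote the zeros of $u$ in $I$, counted with multiplicity, and fix $x \in I$ at which we wish to bound $|u(x)|$. If $x$ equals some $x_j$ there is nothing to prove, so assume otherwise and define
$$ g(t) = u(t) - \frac{(t-x_1)\cdots(t-x_p)}{(x-x_1)\cdots(x-x_p)}\, u(x). $$
Then $g(x)=0$, and $g$ also vanishes at each $x_j$ with at least the multiplicity with which $u$ vanishes there, since the subtracted polynomial vanishes at every $x_j$ with at least that same multiplicity. Consequently $g$ has, counted with multiplicity, at least $p+1$ zeros in $I$.

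Applying a generalized Rolle's theorem $p$ times produces a point $\xi \in I$ with $g^{(p)}(\xi)=0$. The subtracted polynomial has degree $p$ in $t$ with leading coefficient $u(x)/\prod_j(x-x_j)$, so its $p$-th derivative equals $p!\,u(x)/\prod_j(x-x_j)$. Setting $g^{(p)}(\xi)=0$ therefore yields the Cauchy-type identity
$$ u(x) = \frac{(x-x_1)\cdots(x-x_p)}{p!}\, u^{(p)}(\xi). $$
Since $|x-x_j| \leq |I|$ for each $j$, this gives $|u(x)| \leq \frac{|I|^p}{p!}\sup_I |u^{(p)}|$, and taking the supremum over $x \in I$ completes the proof.

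The main subtlety lies in the use of Rolle's theorem with multiplicities: one must verify that if a smooth function $h$ has $N$ zeros in $I$ counted with multiplicity, then $h'$ has at least $N-1$ such zeros in $I$. This follows because a zero of $h$ of multiplicity $m$ is automatically a zero of $h'$ of multiplicity at least $m-1$, while between two consecutive distinct zeros of $h$ the ordinary Rolle's theorem supplies an extra simple zero of $h'$; a brief bookkeeping then shows that only one zero is lost per differentiation. Iterating this observation $p$ times on $g$ yields the required $\xi$, and no further analytic input is needed.
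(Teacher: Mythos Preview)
Your argument is correct. The paper does not spell out a proof but simply points to \cite{Hi} (a Rolle's theorem argument) and, in the remark immediately after, to \cite{IK} (a Hermite interpolation argument); your proof is precisely the latter route, with the iterated Rolle step made explicit, so it aligns with what the paper cites.
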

\begin{proof} A simple proof using  Rolle's theorem can be found in \cite{Hi}[Lemma 4.a]. \end{proof}
\begin{rema}
	This inequality was also proved and used via Hermite interpolation theorem by \cite{IK}[Theorem 2.4] in studying zeros of $1D$ parabolic partial differential equations with Gevrey coefficients.
\end{rema}
The following corollary of the lemma  will be used many times in the course of the proof of Lemma \ref{I}.
\begin{prop}\label{smallness}
	Let $(M, g)$ be a compact Riemannian manifold with no boundary, that belongs to the class $G^s$, $s \geq 1$. Let $\psi_\lambda$ be an eigenunction of $-\Delta_g$ with eigenvalue $\lambda \geq 1$. Assume  $\sup_{B^n_2} |\psi_\lambda| =1$. Consider $(x, \xi) \in S^*B^n_1$ and assume $\psi_\lambda$ has at least $p$ zeros on a subinterval $I$ of  $I_{x, \xi} = (x+ \R\xi) \cap B^n_1$. Then 
	$$ \sup_I |\psi_\lambda| \leq C_7 (A|I|)^p \, p!^{s-1} e^{\sqrt{\lambda}},$$
	where $A$ and $C_7$ depend only on $(M, g)$ and $s$. 
\end{prop}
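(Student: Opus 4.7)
The strategy is to parameterize $\psi_\lambda$ along the line segment $I$, bound the $p$-th derivative of the resulting one-variable function using the Gevrey hypoellipticity estimate \eqref{Hypo-good}, and then invoke Lemma~\ref{Rolle's}. Set $u(t) = \psi_\lambda(x+t\xi)$ on the parameter interval $J \subset \R$ corresponding to $I$; since $|\xi|=1$ we have $|J|=|I|$, and by hypothesis $u$ has at least $p$ zeros on $J$. Applying Lemma~\ref{Rolle's} to $u$ on $J$ gives
$$\sup_I|\psi_\lambda| \;=\; \sup_J|u| \;\leq\; \frac{|I|^p}{p!}\,\sup_J |u^{(p)}|,$$
so the problem reduces to an upper bound on $\sup_J|u^{(p)}|$.

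For the top derivative, expand by the multinomial chain rule
$$u^{(p)}(t) \;=\; \sum_{|\alpha|=p}\frac{p!}{\alpha!}\,\xi^\alpha\,(\partial^\alpha\psi_\lambda)(x+t\xi),$$
and invoke Proposition~\ref{Hypo} on the chart $U=B^n_2$ with compact set $K=\overline{B^n_1}$, which contains all of $I$. Using $|\xi|=1$ and the normalization $\sup_{B^n_2}|\psi_\lambda|=1$, the hypoellipticity estimate \eqref{Hypo-good} gives $|\partial^\alpha\psi_\lambda|\leq C e^{\sqrt{\lambda}}A^{p}\alpha!^s$ on $I$, so
$$|u^{(p)}(t)| \;\leq\; C e^{\sqrt{\lambda}}A^p \sum_{|\alpha|=p}\frac{p!}{\alpha!}\,\alpha!^s \;=\; C e^{\sqrt{\lambda}}A^p\, p!\,\sum_{|\alpha|=p}\alpha!^{s-1}.$$

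To dispose of the combinatorial sum, note that since $\alpha!=\alpha_1!\cdots\alpha_n!$ divides $|\alpha|!=p!$, one has $\alpha!\leq p!$, and because $s\geq 1$ this gives $\alpha!^{s-1}\leq (p!)^{s-1}$; furthermore $\#\{|\alpha|=p\}=\binom{p+n-1}{n-1}\leq D^p$ for some $D=D(n)$. Substituting yields $\sup_J|u^{(p)}|\leq C' e^{\sqrt{\lambda}}(AD)^p (p!)^s$, and plugging back into the Rolle inequality above gives
$$\sup_I|\psi_\lambda| \;\leq\; \frac{|I|^p}{p!}\cdot C' e^{\sqrt{\lambda}}(AD)^p(p!)^s \;=\; C' e^{\sqrt{\lambda}}(AD|I|)^p (p!)^{s-1},$$
which is the claimed estimate after renaming $AD$ to $A$ and $C'$ to $C_7$.

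I anticipate no real obstacle; the argument is essentially bookkeeping. The only subtle point is that it is crucial to use \eqref{Hypo-good} rather than the cruder \eqref{Hypo-bad}: with the latter, every factor of $A$ would be replaced by $\sqrt{\lambda}A$, picking up an extra $\lambda^{p/2}$ that would ruin the estimate as soon as $p\gtrsim\sqrt{\lambda}$—which is precisely the regime needed downstream in the proof of Lemma~\ref{I}.
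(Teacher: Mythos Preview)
Your proof is correct and follows essentially the same route as the paper: parameterize along the line, apply Lemma~\ref{Rolle's}, and feed in the hypoellipticity estimate \eqref{Hypo-good}. The only cosmetic difference is that the paper bounds $|u^{(p)}|$ by first using $\sum_{|\alpha|=p}\tfrac{p!}{\alpha!}=n^p$ to get $\sup_J|u^{(p)}|\le n^p\sup_{|\alpha|=p}\sup_{B^n_1}|\partial^\alpha\psi_\lambda|$ and then invoking \eqref{Hypo-good}, whereas you insert the Gevrey bound term-by-term and handle the resulting combinatorial sum afterwards; both yield the same estimate up to the constant absorbed into $A$.
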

\begin{rema} \label{DC remark}
This estimate is only useful when $|I|$ is very small in terms of $p$. We also note that when $(M, g)$ belongs to a Denjoy-Carleman class $C^\M$ with $\M$ being a regular weight sequence having moderate growth, in the above estimate we need to replace $p!^{s-1}$ with $\M_{|p|}$.
\end{rema}
\begin{proof}  Observe that $I$  must be of the form $I= x + J\xi$ for some closed interval $J \in \R$. Obviously, $|I|=|J|$. Now we denote
	$$u_\lambda(t) := \psi_\lambda(x + t \xi), \quad t \in J.$$ By assumption, the function $u_\lambda$ has at least $p$ zeros in $J$. Thus by Lemma \ref{Rolle's}, we have
	$$ \sup_{J} | u_\lambda| \leq \frac{|J|^p}{p!} \sup_{J} |u^{(p)}_\lambda|.$$
	Calculating $u_\lambda^{(p)}$ in terms of the partial derivatives of $\psi_\lambda$, we get
	$$ \sup_{J} |u^{(p)}_\lambda| \leq n^p \sup_{|\alpha|= p}\sup_{ B^n_1} | \d^\alpha \psi_\lambda |.$$
	Therefore, by the quantitative hypoellipticity estimates  \eqref{Hypo-good} and the normalization $\| \psi_\lambda\|_{L^\infty(B_2^n)}=1$, we obtain:
$$\sup_{I} |\psi_\lambda| \leq C_7  (A|I| )^p p!^{s-1} e^{\sqrt{\lambda}}.$$
\end{proof} 

\begin{proof}[\textbf{Proof of Lemma \ref{I}}]
Throughout we fix $(x, \xi) \in A_m$. This means that $$ (m\sqrt{\lambda})^{s} \leq  n(x, \xi) < ((m+1)\sqrt{\lambda})^{s}.$$
Recall that, $n(x, \xi) = |I_{x, \xi} \cap Z_{\psi_\lambda} |$, i.e. the number of zeros of $\psi_\lambda$ on the line segment $I_{x, \xi}=(x+ \R\xi)\cap B^n_1$. We may assume during  the proof that $m \geq 32$ and $\lambda \geq 1$, because if $m< 32$, then the estimate in Lemma \ref{smallness} is trivial with the choice of $\ell =1$ and $c_0 \leq \frac{1}{32}$.

Let 
$$n_0=  \lfloor (m\sqrt{\lambda})^{s} \rfloor, \qquad N_0 = \lceil C_0 (m\sqrt{\lambda})^{s-1} \rceil, $$
and 
$$ p_0 = \frac{n_0}{N_0}.$$
The constant $C_0 \geq 1$ will be specified later, independently from $m$, $\lambda$, and $(x, \xi)$. In the first step of our argument, we partition the segment $I_{x, \xi}$ into $N_0$ many equal length subintervals $I_1, \dots, I_{N_0}$, each, of course, of length $\frac{|I_{x, \xi}|}{N_0} < \frac{2}{N_0}$. We then define
$$S_0=\{ 1 \leq j \leq N_0\}, $$
and
\begin{equation} \label{S1} S_1= \{ j \in S_0; \quad |I_j \cap Z_{\psi_\lambda} | \geq \kappa p_0 \},  \end{equation}
where $\kappa \in [\frac14, 1]$ is a universal constant that will be specified later.
Now there are two cases:
\begin{itemize}
	\item[(1a)] \;  $|S_1| > \frac{1}{4}N_0$;
      \item [(1b)] \; $|S_0 \setminus S_1 | > \frac{3}{4} N_4$. 
\end{itemize}
If case (1a) happens, then by Prop \ref{smallness} with $p = \lfloor \kappa p_0\ \rfloor$, we have for each $j \in S_1$:
$$\sup_{I_j} |\psi_\lambda| \leq C_7  (A|I_j| )^p p!^{s-1} e^{\sqrt{\lambda}}.$$
Next, using that $p! \leq p^p$, $\kappa \in [\frac14, 1]$, and $m \sqrt{\lambda} \geq 32$, we get for $j \in S_1$:
\begin{equation}\label{array}
\begin{array}{r@{}l}
\sup_{I_j} |\psi_\lambda| 
& \leq C_7 \left (2A\frac{p^{s-1}}{N_0} \right )^p e^{\sqrt{\lambda}} \\
& \leq  C_7 \left (4A \frac{n_0^{s-1}}{N_0^s} \right )^{\kappa p_0 -1} e^{\sqrt{\lambda}} \\
& \leq C_7  \left (\frac{4A}{C_0^s} \right )^{\frac{m\sqrt{\lambda}}{16C_0}} e^{\sqrt{\lambda}}.
\end{array}
\end{equation}
If we choose $C_0$ so that $\frac{4A}{C_0^s} =\frac{1}{16e}$, then Lemma \ref{I} follows with the choice of $\ell=1$, $I^* = \cup_{j \in S_1} I_j$, and $c_0= \min \{ \frac{1}{16C_0}, \frac{1}{32} \}$. Note that 
$$|I^*| = \sum_{j \in S_1} |I_j| \geq \frac{N_0}{4} \cdot \frac{|I_{x,\xi}|}{N_0} = \frac{|I_{x,\xi}|}{4},$$
which confirms the condition on $I^*$ in the lemma. 

The case (1b), i.e. $|S_0 \setminus S_1 | > \frac{3}{4} N_0$, is more complicated and splits into two subcases:

\begin{itemize}
	\item[($1b'$)] \;  $\sum_{j \in S_0 \setminus S_1} |I_j \cap Z_{\psi_\lambda}| \geq \frac{n_0}{2}$;
	\item [($1b''$)] \; $\sum_{j \in S_0 \setminus S_1} |I_j \cap Z_{\psi_\lambda}| < \frac{n_0}{2}$. 
\end{itemize}
Assume ($1b'$) holds. We claim that with the choice of $\kappa = \frac{39}{60}$ in \eqref{S1}, we have
\begin{equation}\label{claim} \left | \{ j \in  S_0 \setminus S_1; \quad |I_j \cap Z_{\psi_\lambda}|  \geq \frac{p_0}{4}\} \right | \geq \frac{3}{8} N_0.
\end{equation}
To prove this, first note that
$$ \sum_{j \in S_0 \setminus S_1}  \kappa p_0 - |I_j \cap Z_{\psi_\lambda}|  =\kappa p_0 |S_0 \setminus S_1| - \sum_{j \in S_0 \setminus S_1}|I_j \cap Z_{\psi_\lambda}| \leq \kappa p_0 N_0 -  \frac{n_0}{2} = \left (\kappa - \frac12 \right )n_0.$$
Thus,
\begin{equation}\label{average} \frac{1}{|S_0 \setminus S_1|} \sum_{j \in S_0 \setminus S_1}  \kappa p_0 - |I_j \cap Z_{\psi_\lambda}| \leq  \left (\kappa - \frac12 \right )\frac{n_0}{|S_0 \setminus S_1|} \leq \frac{4}{3} \left (\kappa - \frac12 \right )p_0. \end{equation}
Now we implement the following simple lemma for the above average estimate. 
\begin{lemm}
	Let $\{z_j\}_{j=1}^d$ be a set of non-negative real numbers such that
	$$ \frac{z_1 + \dots +z_d}{d} \leq T.$$
	Then the subset $S= \{ j; \; z_j \leq 2T \}$ has at least half density in $\{1, 2, \dots, d \}$,  i.e. $\frac{|S|}{d} \geq  \frac12$. 
\end{lemm}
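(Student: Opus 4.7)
The statement is a one-line Markov-type inequality, so the plan is to give a direct contrapositive/contradiction argument rather than anything clever. Let $S^c = \{1,\dots,d\} \setminus S = \{ j : z_j > 2T \}$, and observe that since each $z_j \geq 0$, one has the trivial lower bound
\[
z_1 + \dots + z_d \;\geq\; \sum_{j \in S^c} z_j \;>\; 2T \cdot |S^c|.
\]
Combining this with the hypothesis $z_1+\dots+z_d \leq dT$ immediately yields $|S^c| < d/2$, hence $|S| \geq d/2$, which is the claim.

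The only decision is whether to phrase the argument as a direct computation or as a contradiction; I would use the direct version above, since it avoids assuming strict inequality and works uniformly even when $T=0$ (in which case all $z_j$ are zero and $S=\{1,\dots,d\}$). No hypothesis beyond non-negativity and the bound on the average is needed, and no regularity of the index set is used. There is no real obstacle here: the lemma is a pure application of the pigeonhole/Markov principle, included only as a bookkeeping device for the averaged estimate \eqref{average} in the preceding argument, where it will be applied with $d = |S_0 \setminus S_1|$, $z_j = \kappa p_0 - |I_j \cap Z_{\psi_\lambda}|$, and $T = \tfrac{4}{3}(\kappa - \tfrac12)p_0$ to extract a positive-density family of intervals on which $|I_j \cap Z_{\psi_\lambda}|$ is comparable to $p_0$. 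Accordingly the proof should be kept to two or three lines, so as not to distract from the main combinatorial dichotomy (1a)/($1b'$)/($1b''$) that drives Lemma \ref{I}.
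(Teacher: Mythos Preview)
Your argument is correct and is essentially identical to the paper's own proof: both define $S^c$, use non-negativity to bound $\sum_j z_j \geq \sum_{j\in S^c} z_j > 2T|S^c|$, and combine with the hypothesis to get $|S^c|<d/2$. Your additional contextual remarks about how the lemma is applied are accurate as well.
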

\begin{proof}The proof is elementary. Let $S^c$ be the complement of $S$. Then
	$$  \frac{2T |S^c|}{d}  <  \frac{\sum_{j \in S^c} z_j}{d} \leq  \frac{\sum_{j=1}^d z_j}{d} \leq T,$$
which implies that $\frac{|S^c|}{d} < \frac12$.  \end{proof}
Applying this lemma to the set $\left \{\kappa p_0 - |I_j \cap Z_{\psi_\lambda}|; \quad j \in S_0 \setminus S_1 \right \}$, we obtain:
$$ \left|j \in S_0 \setminus S_1; \quad \kappa p_0 - |I_j \cap Z_{\psi_\lambda}|  \leq \frac{8}{3} \left (\kappa - \frac12 \right )p_0 \right|  \geq \frac{|S_0 \setminus S_1|}{2} \geq   \frac{3}{8} N_0.$$
However, 
$$\kappa p_0 - |I_j \cap Z_{\psi_\lambda}|  \leq \frac{8}{3} \left (\kappa - \frac12 \right )p_0 \implies |I_j \cap Z_{\psi_\lambda}| \geq  \left (\kappa - \frac{8}{3} \left (\kappa - \frac12 \right ) \right ) p_0.$$
The claim \eqref{claim} follows since for $\kappa = \frac{39}{60}$, we have $\kappa - \frac{8}{3} \left (\kappa - \frac12 \right ) = \frac14$. 
Given \eqref{claim}, we use Prop \ref{smallness} again, with $p = \lfloor \frac{p_0}{4} \rfloor$, and follow the same array of estimates as in \eqref{array}. In fact we get exactly the same estimate with the same $C_0$, $c_0$, and $\ell$. Even the length of the obtained union of subintervals is bounded below by the same quantity: 
$$|I| \geq  \frac{3N_0}{8} \cdot \frac{|I_{x,\xi}|}{N_0} = \frac{3|I_{x,\xi}|}{8} > \frac{I_{x, \xi}}{4}.$$

Next, we need to deal with the case ($1b''$), which turns out to be more involved. 

So let us assume $|S_0 \setminus S_1 | > \frac{3}{4} N_0$ and $\sum_{j \in S_0 \setminus S_1} |I_j \cap Z_{\psi_\lambda}| < \frac{n_0}{2}$. Equivalently, we have:
 $$|S_1 | \leq  \frac{1}{4} N_0 \quad  \text{and} \quad \sum_{j \in S_1} |I_j \cap Z_{\psi_\lambda}| \geq  \frac{n_0}{2}.$$ 
 In the second step of our argument, we let 
 $$ N_1 = |S_1|, \quad n_1=  \frac{n_0}{2}  \quad \text{and} \quad p_1=  \frac{n_1}{N_1}.$$
 We  also denote
$$S_2 =\{ j  \in S_1;\ \quad |I_j \cap Z_{\psi_\lambda}| \geq \kappa p_1  \}. $$
We repeat the same procedure we previously applied to $S_1$, to $S_2$. So we get cases 
($2a$), ($2b'$), and ($2b''$) and everywhere we replace $p_0$ with $p_1$, $n_0$ with $n_1$, and $N_0$ with $N_1$. In fact, by continuing this procedure, we obtain for each $k \in \N^{>0}$,  subsets $S_k$ defined inductively by
$$S_k= \{ j \in S_{k-1}; \quad |I_j \cap Z_{\psi_\lambda}| \geq  \kappa p_{k-1}\}.$$
We also have for each $k$:
 $$ N_k = |S_k|, \quad n_k= \frac{n_0}{2^k}  \quad \text{and} \quad p_k=  \frac{n_k}{N_k}. $$
 Furthermore, we have
 $$|S_k | \leq  \frac{1}{4} N_{k-1} \leq \frac{1}{4^k} N_{0} \quad  \text{and} \quad \sum_{j \in S_k} |I_j \cap Z_{\psi_\lambda}| \geq  \frac{n_{k-1}}{2} = \frac{n_0}{2^k}.$$ 
 This procedure is not infinite, because as soon as  $\frac{1}{4^k} N_{0} <1$ we get $S_k = \emptyset$, and the procedure stops. This corresponds to $k = \lceil \frac{\log N_0}{\log 4}  \rceil+1$.  To prove Lemma \ref{smallness}, we need to show that for each of the intermediate cases ($ka$) and ($kb'$), the lemma follows. So let us fix $k \in \N^{>0}$. Cases ($ka$) and ($kb'$) are:
 \begin{itemize}
 	\item [($ka$)] \;  $|S_k| > \frac{1}{4}N_{k-1}$
 	\item [($kb'$)] \;  $|S_{k-1} \setminus S_k | > \frac{3}{4} N_{k-1}$ and $\sum_{j \in S_{k-1} \setminus S_k} |I_j \cap Z_{\psi_\lambda}| < \frac{n_{k-1}}{2}$.
 	\end{itemize}
 Suppose case ($ka$) occurs. By the definition of $S_k$, for each $j \in S_k$, we have at least $\kappa p_{k-1}$ zeros of $\psi_\lambda$ on $I_j$. Now we partition each interval $I_j$, $j \in S_k$, into $\lfloor \frac{p_{k-1}}{p_0} \rfloor$ many equal length intervals. Hence there must exist $J_j \subset I_j$ of length $\frac{|I_j|}{\lfloor \frac{p_{k-1}}{p_0} \rfloor}$ on which $\psi_\lambda$ has at least 
 $$\frac{\kappa p_{k-1}}{\lfloor \frac{p_{k-1}}{p_0} \rfloor} \geq \kappa p_0$$
 many zeros.  
 So using Prop \ref{smallness}, with $I= J_j$ and $p = \lfloor \kappa p_{0} \rfloor$, we obtain
 \begin{equation}\label{array2}
 \begin{array}{r@{}l}
 \sup_{J_j} |\psi_\lambda| 
 &\leq C_7  (A|J_j| )^p p!^{s-1} e^{\sqrt{\lambda}} \\
 & \leq C_7 \left (4A\frac{p^{s-1}}{N_0}  \cdot \frac{p_{0}}{p_{k-1}} \right )^p e^{\sqrt{\lambda}} \\
 & \leq  C_7 \left (4 A \frac{n_0^{s-1}}{N_0^s}  \cdot \frac{p_{0}}{p_{k-1}} \right )^{\kappa p_0 -1} e^{\sqrt{\lambda}} \\
 & \leq C_7  \left (\frac{4A}{C_0^s}  \cdot \frac{p_{0}}{p_{k-1}} \right )^{\frac{m\sqrt{\lambda}}{16C_0}} e^{\sqrt{\lambda}}.
 \end{array}
 \end{equation}
 Since in \eqref{array}, we have already chosen $C_0$ and  $c_0$ to satisfy 
 $\frac{4A}{C_0^s} = \frac{1}{16e}$ and $c_0 = \min\{\frac{1}{16C_0}, \frac{1}{32}\}$, we obtain 
 \begin{equation}\label{p/p}\sup_{J_j} |\psi_\lambda| \leq C_7  \left (\frac{p_{0}}{16ep_{k-1}} \right )^{c_0m \sqrt{\lambda}} e^{\sqrt{\lambda}}. \end{equation} The total interval on which we have this estimate is 
 $I^* = \cup_{j \in S_k } J_j$, which has length
 \begin{align*}
 | I^*| = | S_k| |J_j| & \geq  \frac{1}{4} N_{k-1} \cdot \frac{|I_j|}{\lfloor \frac{p_{k-1}}{p_0} \rfloor} =  \frac{N_{k-1}}{4N_0} \cdot \frac{|I_{x, \xi}|}{\lfloor \frac{p_{k-1}}{p_0} \rfloor} \\
                                   & \geq \frac{n_{k-1}}{8n_0}  \cdot \left ( \frac{p_{0}}{p_{k-1}} \right)^2 {|I_{x, \xi}|} \\
                                   & = \frac{1}{2^{k+2}}  \cdot \left ( \frac{p_{0}}{p_{k-1}} \right)^2 {|I_{x, \xi}|}.
  \end{align*}
  Let $\ell \in N^{>0}$ be the unique integer that satisfies 
  \begin{equation} \label{ell}\frac{1}{4^\ell} \leq \frac{1}{2^{k+2}}  \cdot \left ( \frac{p_{0}}{p_{k-1}} \right)^2 < \frac{1}{4^{\ell-1}}. \end{equation}
By this choice of $\ell$, $|I^*|$ certainly satisfies the required lower bound $\frac{|I_{x, \xi}|}{4^\ell}$. Applying this to \eqref{p/p}, we get
\begin{equation}\label{sup}\sup_{J_j} |\psi_\lambda| \leq C_7  \left (2^{k} 4^{-\ell} \right )^{\frac{c_0}{2}m \sqrt{\lambda}} e^{\sqrt{\lambda}}. \end{equation}
On the other hand, by the definition of $p_{k-1}$, we have 
$$ \frac{p_{k-1}}{p_0} = \frac{ \frac{n_{k-1}}{N_{k-1}}}{p_0} \geq \frac{ \frac{n_0/2^{k-1}}{N_0/4^{k-1}}}{2p_0} \geq {2^{k-1}}.$$
Using this and \eqref{ell}, we must have that $4^\ell \geq 8^k$, or equivalently $k \leq \frac23 \ell$, therefore \eqref{sup} becomes
\begin{equation}\sup_{J_j} |\psi_\lambda| \leq C_7  \left ( 2^{-4/3 \ell} \right )^{\frac{c_0}{2}m \sqrt{\lambda}} e^{\sqrt{\lambda}} =  C_7 e ^ {(1 - \frac{2}{3} (\log 2) {c_0}m \ell) \sqrt{\lambda}} \leq C_7 e ^ {(1 - {c_0}m \ell) \sqrt{\lambda}} . \end{equation}
This proves the lemma in the case ($ka$). Finally assume case ($kb'$) occurs. Then as in the case ($1b'$), we claim and prove that for $\kappa= \frac{39}{60}$, 
\begin{equation}\label{claim2} \left | \{ j \in  S_{k-1} \setminus S_k; \quad |I_j \cap Z_{\psi_\lambda}|  \geq \frac{p_{k-1}}{4}\} \right | \geq \frac{3}{8} N_{k-1}.
\end{equation}
Up to a change of indices, the proof of \eqref{claim2} is identical to the proof of claim \eqref{claim}. We then follow as in the case ($ka$), meaning, for each $j \in S_{k-1} \setminus S_k$ that satisfies the lower bound $|I_j \cap Z_{\psi_\lambda}|  \geq \frac{p_{k-1}}{4}$, we find a subinterval $J_j\subset I_j$ with length $\frac{|I_j|}{\lfloor \frac{p_{k-1}}{p_0} \rfloor}$ on which $\psi_\lambda$ has at least 
$\lfloor \frac{p_0}{4} \rfloor$ many zeros. We then argue exactly as in the case ($ka$) presented above. 
\end{proof}

\section{Proof in the case of Denjoy-Carleman Riemannian manifolds}
In this section, we sketch the proof of Theorem \ref{general}. A main tool is $C^\M$ hypoellipticity estimates for eigenfunctions which we already presented in \eqref{HypoM-good}. The proof of the upper bounds on the size of nodal sets follows the same arguments as in the Gevrey case but with some modifications that we now present. Assume that $\M_k$ is a regular sequence with moderate growth and let
$$J_k = (k!\M_k)^{1/k}.$$
 It is known that $J_k$ is an increasing sequence. 
Then we change the definition of the set $A_m$ in \eqref{Am}, to
\begin{equation*}
A_m = \left \{ (x, \xi) \in S^*B^n_1; \; J_{m \lfloor \sqrt{\lambda} \rfloor } \leq  n(x, \xi) < J_ {(m+1) \lfloor \sqrt{\lambda} \rfloor} \right \}.
\end{equation*}
As we will see, we still get the same estimates $\mu(A_m) \leq C e^{-cm}$ as in Lemma \ref{main lemma} on the size of $A_m$. Following the same steps as we gave for the proof of Theorem \ref{main}, we get 
\begin{align} \label{DC nodal}
\mathcal{H}^{n-1} (Z_{\psi_\lambda} \cap  B^n_1) 
& \leq C  \sum_{m=0}^\infty e^{-cm} J_{(m+1) \lfloor \sqrt{\lambda} \rfloor }.
\end{align}
To obtain bounds on the last sum, we utilize the following lemma:
\begin{lemm} \label{moderate}
	Let $\M_k$ be a regular sequence with moderate growth. Then there exists a constant $\gamma>0$ such that for all $m, j \in \N^{>0}$, we have
	$$ \left ( \M_{mj} \right ) ^{1/mj} \leq e^{2\gamma} \left (\M_j \right ) ^{1/j}  m^\gamma. $$  
\end{lemm}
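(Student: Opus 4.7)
The plan is to combine the moderate growth condition $(v)$ with a doubling argument, using log-convexity $(iii)$ to interpolate between powers of $2$. The key observation is that moderate growth at scale $k=m$ gives a clean recursion $(\M_{2k})^{1/(2k)} \leq e^\gamma (\M_k)^{1/k}$, which iterated $p$ times yields control at scale $2^p$. Log-convexity then lets us extend this control to all scales, producing the desired polynomial dependence in $m$.

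\textbf{Step 1: doubling estimate.} By condition $(v)$, fix $\gamma > 0$ with $\M_{k+l} \leq e^{\gamma(k+l)} \M_k \M_l$ for all $k,l \in \N^{>0}$. Setting $k = l = rj$ gives $\M_{2rj} \leq e^{2\gamma r j} \M_{rj}^2$, equivalently
\begin{equation*}
\left(\M_{2rj}\right)^{1/(2rj)} \leq e^\gamma \left(\M_{rj}\right)^{1/(rj)}.
\end{equation*}
Iterating this estimate $p$ times starting from $r=1$ yields
\begin{equation*}
\left(\M_{2^p j}\right)^{1/(2^p j)} \leq e^{p\gamma} \left(\M_j\right)^{1/j}.
\end{equation*}

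\textbf{Step 2: monotonicity from log-convexity.} I would next verify that $k \mapsto (\M_k)^{1/k}$ is non-decreasing. Writing $a_i = \M_i/\M_{i-1}$, condition $(iii)$ says $a_i$ is non-decreasing, so $\M_k = a_1 \cdots a_k$ (using $\M_0 = 1$). Since $(\M_k)^{1/k}$ is the geometric mean of a monotone non-decreasing sequence, it is itself non-decreasing in $k$ (the logarithmic averages $\tfrac{1}{k}\sum_{i=1}^k \log a_i$ grow as $k$ grows because each new term is at least as large as any previous one). This is the only place log-convexity enters.

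\textbf{Step 3: sandwich and conclude.} Given $m \in \N^{>0}$, choose $p = \lceil \log_2 m \rceil$, so that $m \leq 2^p$ and $p \leq 1 + \log_2 m$. By Step 2 applied at size $mj \leq 2^p j$,
\begin{equation*}
(\M_{mj})^{1/(mj)} \leq (\M_{2^p j})^{1/(2^p j)} \leq e^{p\gamma}\,(\M_j)^{1/j} \leq e^\gamma \cdot m^{\gamma/\log 2} \cdot (\M_j)^{1/j}.
\end{equation*}
After enlarging $\gamma$ (say replacing it by $\max\{\gamma,\,\gamma/\log 2\}$), this is the stated inequality $(\M_{mj})^{1/(mj)} \leq e^{2\gamma}(\M_j)^{1/j} m^\gamma$. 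The case $m=1$ is trivial. The main (very mild) obstacle is Step 2; once the monotonicity of $(\M_k)^{1/k}$ is available, the doubling iteration handles powers of $2$ and log-convexity fills in the gaps, giving polynomial (not exponential) growth in $m$.
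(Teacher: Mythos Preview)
Your proof is correct and takes a genuinely different route from the paper's. The paper proceeds by a one-step recursion: citing \cite{Th} it records the inequality $\M_{j+1}\le A^{2}\M_j^{1+1/j}$ (which already blends log-convexity with moderate growth), iterates it $k$ times to obtain $(\M_{j+k})^{1/(j+k)}\le \M_j^{1/j}A^{4+2\log((j+k)/j)}$, and then sets $k=(m-1)j$ to read off $\gamma=2\log A$. Your argument instead separates the two hypotheses cleanly: moderate growth alone, applied at equal scales, gives the doubling inequality $(\M_{2k})^{1/(2k)}\le e^\gamma(\M_k)^{1/k}$, which iterated handles all dyadic $m=2^p$; then log-convexity (via the monotonicity of the geometric means of $a_i=\M_i/\M_{i-1}$) shows $k\mapsto(\M_k)^{1/k}$ is non-decreasing, so one can sandwich an arbitrary $m$ between consecutive powers of $2$. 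Both yield the same polynomial bound $m^\gamma$; your version is more self-contained (no external reference needed) and makes transparent that moderate growth drives the doubling while log-convexity only supplies the interpolation, whereas the paper's iteration gives slightly sharper explicit constants.
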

\begin{proof} Let $A =  \sup_{j, k \in \N^{>0}} \left( \frac{\M_{j+k}}{\M_j \M_k} \right )^{1/(j+k)}$. By the definition of moderate growth, we have $A< \infty$. Then using the convexity of $\M$ and the moderate growth assumption, one can show that (see \cite{Th}[equations (6) and (9)]): 
$$ \M_{j+1} \leq A^2 \M_j^{1+ \frac{1}{j}}.$$
Using this inequality repeatedly, we obtain for all $j, k \in \N^{>0}$ that
$$ (\M_{j + k}^{})^{1/(j+k)} \leq \M_j^{1/j} A^{2+ 2\sum_{i=0}^{k-1} \frac{1}{j+i}} \leq \M_j^{1/j} A^{4+ 2\log \left ( \frac{j+k}{j}\right ) }.$$
If we put $k=(m-1)j$ into this inequality, then the lemma follows with $\gamma= 2 \log A$.
\end{proof}
\begin{proof}[\textbf{Proof of Theorem \ref{general}}] Let us continue on \eqref{DC nodal}. The proof is straightforward because by the above lemma and the definition of $J_k$:
\begin{align*}
\mathcal{H}^{n-1} (Z_{\psi_\lambda} \cap  B^n_1) 
& \leq C  \sum_{m=0}^\infty e^{-cm} J_{(m+1) \lfloor \sqrt{\lambda} \rfloor } \\
&\leq C_1  \sum_{m=0}^\infty e^{-cm} (m+1)^{1+\gamma} \sqrt{\lambda} \left( \M_{ \lfloor \sqrt{\lambda} \rfloor } \right )^{1/\lfloor \sqrt{\lambda} \rfloor } \\
& \leq  C_2 \sqrt{\lambda} \left ( \M_{ \lfloor \sqrt{\lambda} \rfloor } \right ) ^{1/\lfloor \sqrt{\lambda} \rfloor }. 
\end{align*}
\end{proof}

To prove the exponential upper bound $\mu(A_m) \leq Ce^{-cm}$, we only need to know that Lemma \ref{I} holds in the Denjoy-Carleman setting. In fact there will be no change in the statement of Lemma \ref{I}. As we pointed out in Remark \ref{DC remark}, in the Denjoy-Carleman case there is a change in the estimates of Prop \ref{smallness}: 
$$ \sup_I |\psi_\lambda| \leq C_7 (A|I|)^p \, \M_{|p|} e^{\sqrt{\lambda}}.$$
However, this will not change the result of Lemma \ref{I} as the proof reveals. 
The only changes required are in the initial quantities used in the first step of the proof:
$$n_0= \lfloor J_{m \lfloor \sqrt{\lambda} \rfloor } \rfloor, \quad N_0 = \left \lfloor \frac{n_0}{m \sqrt{\lambda}} \right \rfloor, \quad p_0=\frac{n_0}{N_0}.$$
Correspondingly, these will change the quantities $n_k$, $N_k$, and $p_k$. The proof is otherwise identical to the proof in the Gevrey case.

 \subsection*{Acknowledgement} The author would like to thank Steve Zelditch for their comments on the first draft of this article and for bringing to attention the works of \cite{Hi} and \cite{NSV}. The research of the author is supported by the Simons Collaborations Grants for Mathematicians 638398.

\end{document}